\newcommand{\cI}{{\mathcal I}}
\newcommand{\cN}{{\mathcal N}}
\newcommand{\cQ}{{\mathcal Q}}
\newcommand{\cR}{{\mathcal R}}
\newcommand{\cS}{{\mathcal S}}
\newcommand{\cT}{{\mathcal T}}
\begin{document}

\title{Caterpillars on three and four leaves are sufficient to reconstruct normal networks \thanks{The authors were supported by the New Zealand Marsden Fund.}}

\titlerunning{Reconstructing normal networks from caterpillars}        



\author{Simone Linz \and Charles Semple}

\institute{Simone Linz \at
School of Computer Science, University of Auckland, Auckland, New Zealand \\
\email{s.linz@auckland.ac.nz}
\and
Charles Semple \at
School of Mathematics and Statistics, University of Canterbury, Christchurch, New Zealand \\
\email{charles.semple@canterbury.ac.nz}
}

%

\date{Received: date / Accepted: date}

%

\date{\today}

\maketitle

\begin{abstract}
While every rooted binary phylogenetic tree is determined by its set of displayed rooted triples, such a result does not hold for an arbitrary rooted binary phylogenetic network. In particular, there exist two non-isomorphic rooted binary temporal normal networks that display the same set of rooted triples. Moreover, without any structural constraint on the rooted phylogenetic networks under consideration, similarly negative results have also been established for binets and trinets which are rooted subnetworks on two and three leaves, respectively. 
Hence, in general, piecing together a rooted phylogenetic network from such a set of small building blocks appears insurmountable. In contrast to these results, in this paper, we show that a rooted binary normal network is determined by its sets of displayed caterpillars (particular type of subtrees) on three and four leaves. The proof is constructive and realises a polynomial-time algorithm that takes the sets of caterpillars on three and four leaves displayed by a rooted binary normal network and, up to isomorphism, reconstructs this network.
\keywords{Normal networks \and rooted triples \and quads}
\subclass{05C85 \and 68R10}
\end{abstract}


\section{Introduction}

Rooted phylogenetic networks are a generalisation of rooted phylogenetic trees that allow for the representation of non-treelike processes such as hybridisation and lateral gene transfer. While many structural properties of rooted phylogenetic networks have recently been described that have led to a classification of such networks into several well-studied network classes (e.g.\ normal~\cite{willson10}, tree-child~\cite{cardona09}, and tree-based~\cite{francis15}), there remains a lack of practical algorithms to reconstruct them.

In the context of reconstructing rooted phylogenetic trees, supertree methods collectively provide fundamental tools for reconstructing and analysing rooted phylogenetic trees. In general, these classical and commonly-used methods take as input a collection of smaller rooted phylogenetic trees on overlapping leaf sets and output a parent tree (supertree) that `best' represents the entire input collection. For practical reasons, most of these methods do not require the input collection to be consistent. Nevertheless, the property typically underlying any supertree method for reconstructing rooted phylogenetic trees is the following well-known theorem (see, for example, \cite{aho81,semple03}).
\begin{theorem}
Let $\cR$ be the set of rooted triples displayed by a rooted binary phylogenetic $X$-tree $\cT$. Then, up to isomorphism,
\begin{enumerate}[{\rm (i)}]
\item $\cT$ is the unique rooted binary phylogenetic $X$-tree whose set of displayed rooted triples is $\cR$, and

\item $\cT$ can be reconstructed from $\cR$ in polynomial time.
\end{enumerate}
\label{triple}
\end{theorem}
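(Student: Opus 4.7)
My plan is to prove Theorem~\ref{triple} by giving a constructive algorithm (the classical Aho-style BUILD procedure) and showing that it produces the unique tree displaying $\cR$, which simultaneously settles both (i) and (ii).

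The central construction is an auxiliary graph $G[\cR, X]$, which I will call the Aho graph: its vertex set is $X$ and its edges are precisely those pairs $\{a,b\} \subseteq X$ such that there exists $c \in X$ with the rooted triple $ab|c$ belonging to $\cR$. The key lemma to establish is that, if $\cT$ is a rooted binary phylogenetic $X$-tree with $|X| \ge 2$ and $\cR$ is its displayed triple set, then $G[\cR, X]$ is \emph{disconnected}, and its connected components are exactly the two leaf sets $X_1, X_2$ induced by the two subtrees rooted at the children of the root of $\cT$. This has two parts: no edge crosses the bipartition $\{X_1, X_2\}$ (because any triple $ab|c$ displayed by $\cT$ has $a, b$ on the same side of the root), and each $X_i$ is internally connected (proved by induction on $|X_i|$ using triples drawn from within the subtree on $X_i$, noting that a leaf from the other side can always play the role of the outgroup $c$).

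With this lemma in hand, the reconstruction algorithm is immediate: compute $G[\cR, X]$, take its components $X_1, X_2$, recurse on each with the restricted triple set $\cR|_{X_i} = \{ab|c \in \cR : a,b,c \in X_i\}$, and join the two resulting subtrees under a new root. Uniqueness (part~(i)) follows by induction on $|X|$: any rooted binary phylogenetic $X$-tree $\cT'$ with displayed triple set $\cR$ must split $X$ at its root according to the components of $G[\cR,X]$ by the lemma applied to $\cT'$, and then the subtrees on $X_1$ and $X_2$ are forced to be isomorphic to the recursively reconstructed pieces by the inductive hypothesis (after checking that $\cR|_{X_i}$ equals the triple set displayed by the subtree of $\cT'$ on $X_i$). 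For the polynomial-time claim in part~(ii), the Aho graph at each recursive call has $O(|X|)$ vertices and $O(|\cR|)$ edges, and its connected components can be found in linear time; since the recursion tree has at most $|X| - 1$ internal calls, the total running time is polynomial in $|X|$ and $|\cR|$.

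The main obstacle, and the step I would be most careful about, is proving internal connectedness of each $X_i$ in the key lemma: the disjointness half is a one-line observation, but connectedness requires exhibiting, for any two leaves $a, b \in X_i$, a path in $G[\cR,X]$ witnessed by actually-displayed triples, which is where the binarity of $\cT$ and the availability of an outgroup from $X_{3-i}$ are used crucially. Once this lemma is in place, both uniqueness and the algorithmic construction drop out mechanically by induction.
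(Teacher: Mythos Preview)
The paper does not give its own proof of Theorem~\ref{triple}; it quotes the result as well known and cites \cite{aho81,semple03}. Your proposal is correct and is precisely the classical BUILD argument of Aho et al.\ that those references contain, so there is nothing substantive to compare.

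One minor remark: you flag the internal connectedness of each $X_i$ as the delicate step, but in the binary case it is in fact immediate. If $a,b\in X_1$ and $c$ is any leaf in $X_2$ (which is non-empty since $\cT$ is binary), then the most recent common ancestor of $a$ and $b$ lies strictly below the root while $c$ attaches at the root, so $ab|c\in\cR$ and hence $\{a,b\}$ is an edge of $G[\cR,X]$. Thus each $X_i$ is a clique, and no induction is needed for that part.
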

\noindent For an excellent review of supertree methodology, see~\cite{bin04}. As an initial step towards developing supertree-type methods for reconstructing and analysing rooted phylogenetic networks, we would like analogues of Theorem~\ref{triple} for rooted phylogenetic networks.

Gambette and Huber~\cite{gambette12} established that rooted binary level-one networks, that is, rooted binary phylogenetic networks whose underlying cycles are vertex disjoint, are determined by their sets of displayed rooted triples provided each underlying cycle has length at least four. However, there exist two non-isomorphic rooted binary level-two networks that have the same set of displayed rooted triples~\cite[Fig.~11]{gambette12}. This begs the question whether or not displayed subtrees on more than three leaves are sufficient to determine rooted phylogenetic networks in general. While Willson~\cite{willson11} has shown that rooted binary regular networks, which include the class of rooted binary normal networks, on $n$ leaves can be determined and reconstructed (in polynomial time) from their sets of displayed rooted phylogenetic trees on $n$ leaves, arbitrary rooted binary phylogenetic networks cannot be determined in this way, even if branch lengths are considered~\cite[Fig.~3]{pardi15}. As an interesting aside, Francis and Moulton~\cite[Theorem 3.5]{francis18} have shown that  rooted binary tree-child networks are determined by their sets of embedded spanning trees which, importantly, are not necessarily rooted phylogenetic trees.

Partly due to the aforementioned negative deterministic results, recent studies have investigated whether or not rooted phylogenetic networks are determined by their embedded subnetworks like binets~\cite{huber17,vanIersel17} and trinets~\cite{huber17,huber12,vanIersel13}, that is, rooted phylogenetic networks on two and three leaves, respectively. It has been shown that trinets determine rooted binary level-two and rooted binary tree-child networks~\cite{vanIersel13}.
Furthermore, binets determine the number of vertices in a rooted phylogenetic network whose in-degree is at least two but do not contain enough information to determine the structural properties of a rooted phylogenetic network even for restricted network classes~\cite{vanIersel17}. Lastly, for an arbitrary rooted binary phylogenetic network $\cN$ on $n$ leaves, Huber et al.~\cite{huber14} have considered larger subnetworks and shown that, even if for each $n'\in\{1, 2, \ldots, n-1\}$ all embedded subnetworks of $\cN$ on $n'$ leaves are given, $\cN$ cannot necessarily be determined by the resulting set of subnetworks. 

In this paper, we return to the simpler tree-like building blocks of small size for reconstructing rooted phylogenetic networks. In particular, the main result of the paper establishes that rooted binary normal networks are determined by their sets of caterpillars (particular type of subtrees) on three and four leaves and that they can be reconstructed from these sets in polynomial time.

To formally state the main result, we need some notation and terminology.  Throughout the paper, $X$ will always denote a non-empty finite set. A {\em rooted binary phylogenetic network $\cN$ on $X$} is a rooted acyclic directed graph with no parallel arcs satisfying the following three properties:
\begin{enumerate}[(i)]
\item the (unique) root has in-degree zero and out-degree two;

\item a vertex of out-degree zero has in-degree one, and the set of vertices with out-degree zero is $X$; and

\item all other vertices either have in-degree one and out-degree two, or in-degree two and out-degree one.
\end{enumerate}
For technical reasons, if $|X|=1$, then we additionally allow $\cN$ to consist of the single vertex in $X$. The vertices of $\cN$ of out-degree zero are called {\em leaves}, and so $X$ is referred to as the {\em leaf set} of $\cN$. Furthermore, vertices of in-degree one and out-degree two are {\em tree vertices}, while vertices of in-degree two and out-degree one are {\em reticulations}. Arcs directed into a reticulation are called {\em reticulation arcs}, all other arcs are {\em tree arcs}. A {\em rooted binary phylogenetic $X$-tree} is a rooted binary phylogenetic network on $X$ with no reticulations. To ease reading, for the rest of the paper, we will refer to rooted binary phylogenetic networks and rooted binary phylogenetic trees as phylogenetic networks and phylogenetic trees, respectively, as all such networks and trees are rooted and binary.

Let $\cN_1$ and $\cN_2$ be two phylogenetic networks on $X$ with vertex and arc sets $V_1$ and $E_1$, and $V_2$ and $E_2$, respectively. We say $\cN_1$ is {\em isomorphic} to $\cN_2$ if there is a bijection $\varphi: V_1\rightarrow V_2$ such that $\varphi(x)=x$ for all $x\in X$, and $(u, v)\in E_1$ if and only if $(\varphi(u), \varphi(v))\in E_2$ for all $u, v\in V_1$.

\begin{figure}
\center
\scalebox{1.2}{\input{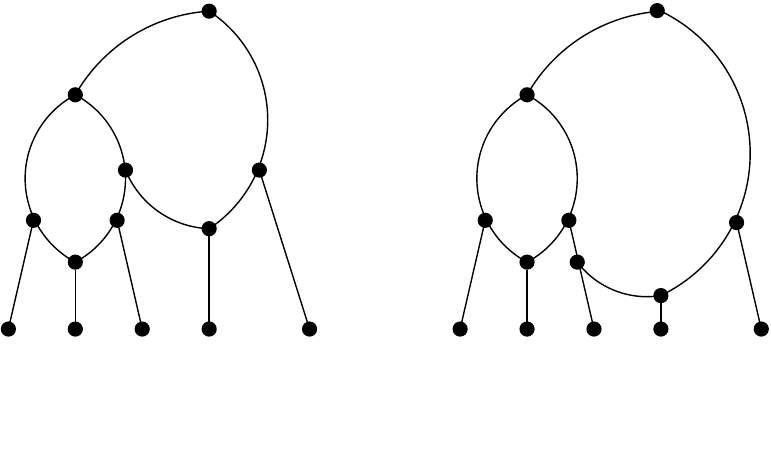_t}}
\caption{Two normal networks on $\{a, b, c, d, e\}$ with the same sets of displayed triples.}
\label{fig:triples}
\end{figure}

Let $\cN$ be a phylogenetic network on $X$. A reticulation arc $(u, v)$ of $\cN$ is a {\em shortcut} if $\cN$ has a directed path from $u$ to $v$ avoiding $(u, v)$. We say $\cN$ is {\em tree-child} if every non-leaf vertex is the parent of a tree vertex or a leaf. Moreover, $\cN$ is {\em normal} if it is tree-child and has no shortcuts. An example of two normal networks is shown in Fig.~\ref{fig:triples}, where, as with all figures in this paper, arcs are directed down the page.

\begin{figure}
\center
\scalebox{1.2}{\input{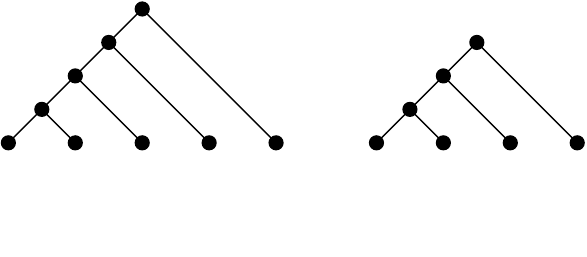_t}}
\caption{Two caterpillars.}
\label{fig:caterpillar}
\end{figure}

Let $\cT$ be a phylogenetic $X$-tree. We call $\cT$ a {\em caterpillar} if we can order its leaf set $X$, say $x_1, x_2, \ldots, x_n$, so that the parents of $x_1$ and $x_2$ are the same and, for all $i\in \{2, 3, \ldots, n-1\}$, we have that $(p_{i+1}, p_i)$ is an arc in $\cT$, where $p_{i+1}$ and $p_i$ are the parents of $x_{i+1}$ and $x_i$, respectively. We denote such a caterpillar $\cT$ by $(x_1, x_2, \ldots, x_n)$ or, equivalently, $(x_2, x_1, x_3, \ldots, x_n)$ where $x_1$ and $x_2$ have been interchanged. As an example, the two phylogenetic trees $\cT$ and $\cT'$ in Fig.~\ref{fig:caterpillar} are caterpillars on five and four leaves, respectively. Here, $\cT$ is denoted by $(b, c, d, a, e)$ and $\cT'$ is denoted by $(a, c, d, e)$. For a caterpillar $\cT$ on $X$, we say that $\cT$ is a {\em triple} if $|X|=3$ and we say that $\cT$ is a {\em quad} if $|X|=4$. While we will denote quads as $4$-tuples, we will denote the triple $(x_1, x_2, x_3)$ by $x_1x_2|x_3$ in keeping with standard notation (e.g.\ see~\cite{semple03}). Note that triples are also referred to as rooted triples in the literature.

Now let $\cN$ be a phylogenetic network on $X$, and let $\cT$ be a phylogenetic $X'$-tree, where $X'$ is a non-empty subset of $X$. Then $\cN$ {\em displays} $\cT$ if $\cT$ can be obtained from $\cN$ by deleting arcs and vertices, and suppressing any resulting vertices of in-degree one and out-degree one. To illustrate, consider Fig.~\ref{fig:triples}. The caterpillar $(b, c, d, a, e)$ is displayed by $\cN$, but the caterpillar $(a, c, d, e)$ is not displayed by $\cN$. Hence, $(a, c, d, e)$ is not an element of the set of quads that are displayed by $\cN$. The main result of this paper is the following theorem.

\begin{theorem}
Let $\cR$ and $\cQ$ be the sets of triples and quads displayed by a normal network $\cN$ on $X$, respectively. Then, up to isomorphism, 
\begin{enumerate}[{\rm (i)}]
\item $\cN$ is the unique normal network on $X$ whose sets of displayed triples and quads are $\cR$ and $\cQ$, and

\item $\cN$ can be reconstructed from $\cR$ and $\cQ$ in $O(1)$ time if $|X|\in \{1, 2\}$ and $O(|X|^4(|X||\cR|+|X|^2|\cR||\cQ|))$ time, that is in $O(|X|^{13})$ time, if $|X|\ge 3$.
\end{enumerate}
\label{main}
\end{theorem}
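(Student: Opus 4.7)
The plan is to proceed by strong induction on $|X|$. The base cases $|X|\in\{1,2\}$ are immediate since, up to isomorphism, there is a single normal network on each such leaf set, so $\cN$ is trivially determined in $O(1)$ time. For the inductive step, the strategy is to identify a \emph{reducible} leaf or pair of leaves of $\cN$ based purely on the combinatorial data $\cR$ and $\cQ$, delete it, compute the corresponding triple and quad sets of the resulting smaller normal network, and then invoke the inductive hypothesis.

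The first main step is to show that every normal network on $|X|\ge 3$ leaves admits a reducible configuration of one of two types: an ordinary cherry $\{a,b\}$ whose members share a tree-vertex parent, or a \emph{reticulate cherry}, that is, a leaf $a$ whose parent is a reticulation $r$ such that some parent of $r$ is itself the parent of another leaf $b$. Existence follows from the tree-child property combined with the absence of shortcuts: every lowest reticulation in $\cN$ is the parent of a leaf, and inspecting its two parents (both of which, by normality, must be tree vertices without shortcut arcs) forces one of the two cherry-like structures near the bottom of the network.

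The second, and most delicate, step is to give a criterion, stated entirely in terms of $\cR$ and $\cQ$, that detects each type of reducible configuration without knowing $\cN$ in advance. Ordinary cherries are easy: $\{a,b\}$ is a cherry iff $ab|c\in\cR$ for every $c\in X\setminus\{a,b\}$. Reticulate cherries are the real difficulty; triples alone do not suffice, as Fig.~\ref{fig:triples} shows, and this is precisely where the quads are indispensable. The plan is to characterise reticulate cherries $(a;b)$ by the presence or absence of specific triples of the form $ab|c$ and $ac|b$, corroborated by quads of the form $(a,b,c,d)$ and its variants, that distinguish whether $a$ hangs off a reticulation whose secondary parent sits above $b$ or below it. Establishing this characterisation, and proving it is both necessary and sufficient, is the hard technical core of the theorem; the rest of the proof is largely bookkeeping around it.

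The third step implements the reduction. Given a detected cherry, delete $a$ and suppress the resulting degree-two vertex; given a detected reticulate cherry $(a;b)$, delete the reticulation arc entering the parent of $a$ from the side incident to $b$'s ancestry and suppress. One then verifies that the new network $\cN'$ is still normal on $X\setminus\{a\}$ or on $X$ respectively, and that its displayed triple and quad sets $\cR'$ and $\cQ'$ are a prescribed, efficiently computable function of $\cR$ and $\cQ$ alone (essentially, restrict to leaves still present and delete the triples and quads witnessed only by the removed arc). The inductive hypothesis applied to $\cN'$ then yields uniqueness of $\cN$, since inverting the reduction is deterministic given the identified reducible configuration. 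For the running time, each of at most $|X|-2$ iterations scans $O(|X|^4)$ candidate configurations and rebuilds $\cR, \cQ$, contributing the claimed $O(|X|^4(|X||\cR|+|X|^2|\cR||\cQ|))=O(|X|^{13})$ bound. The principal obstacle throughout is the quad-based characterisation of reticulate cherries: once that is in hand, existence, uniqueness, and the algorithm follow along the lines just sketched.
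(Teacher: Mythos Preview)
Your overall plan---induction on $|X|$, locate a cherry or reticulated cherry, reduce, and reverse---matches the paper's approach. However, several of the steps you gloss over are either incorrect as stated or omit the real content of the proof.

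\textbf{The cherry test is wrong.} You assert that $\{a,b\}$ is a cherry iff $ab|c\in\cR$ for every $c\in X\setminus\{a,b\}$. This condition is necessary but not sufficient: if $\{a,b\}$ is a reticulated cherry with $b$ the reticulation leaf, then $ab|x\in\cR$ for all $x$ as well (this is exactly condition~(i) in the paper's reticulated-cherry lemma). The correct criterion (Lemma~\ref{cherry1}) is that every triple in $\cR$ involving both $a$ and $b$ has $\{a,b\}$ as the cherry pair; equivalently, one must additionally check that no triple of the form $ax|b$ or $bx|a$ lies in $\cR$.

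\textbf{The induction does not decrease in the reticulated case.} For a reticulated cherry you propose to delete a reticulation arc and suppress, leaving the leaf set $X$ unchanged. But your induction is on $|X|$, so this step does not advance it. The paper instead always deletes the \emph{leaf} $b$ (together with $p_b$ and its incident arcs) in the reticulated case, producing a normal network on $X\setminus\{b\}$. This has the further advantage that the reduced sets $\cR'$ and $\cQ'$ are obtained from $\cR$ and $\cQ$ simply by discarding all triples and quads containing $b$ (Lemma~\ref{triples}). Your arc-deletion variant would require identifying, from $\cR$ and $\cQ$ alone, precisely which triples and quads are witnessed only through the removed arc; you do not explain how to do this, and it is not clear that it can be done.

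\textbf{Inverting the reduction is not trivially deterministic.} You write that ``inverting the reduction is deterministic given the identified reducible configuration,'' but in the reticulated case this is the substantive part of the uniqueness proof. After deleting $b$, one must reinsert the reticulation by subdividing the arc into $a$ (easy) and subdividing some arc to create $g_b$ (hard). The paper records, before deletion, the visibility set $V_{g_b}$ of $g_b$ as part of the detection step (this is the ``candidate set'' $W_b$ in Lemma~\ref{cherry2}), then shows that the vertices of $\cN'$ with visibility set equal to $W_b$ form a directed path $u_1,\ldots,u_k$, and finally uses specific quads of the form $(m_i,\ell,b,a)\in\cQ$ to pinpoint the unique arc on that path where $g_b$ must be inserted. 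None of this is routine, and your sketch does not account for it.

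In short, the skeleton is right, but the cherry criterion needs correcting, the reduction in the reticulated case should delete a leaf rather than an arc, and the reinsertion step requires carrying along the visibility set $W_b$ and the path-and-quad analysis that the paper develops.
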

\noindent Note that, in the running time of Theorem~\ref{main}, if $|X|\ge 3$ and $|\cQ|\ge 1$, then $|X||\cR|$ is dominated by $|X|^2|\cR||\cQ|$, and so the running time is $O(|X|^6|\cR||\cQ|)$. It is easily checked that if a normal network has at least one reticulation and four leaves, at least two reticulations, or at least five leaves, then it has at least one quad.

It is natural to ask whether Theorem~\ref{main} can be strengthened. In particular, are (i) normal networks determined by their displayed triples and are (ii) tree-child networks determined by their displayed triples and quads? For both (i) and (ii), the answer, in general, is no. To see this, first consider the two normal networks on $\{a, b, c, d, e\}$ shown in Fig.~\ref{fig:triples}. Here, both networks display the same set of triples but are not isomorphic. For (ii), consider the three tree-child networks shown in Fig~\ref{fig:tree-child}. All three networks display the same sets of triples and quads, but no two networks are isomorphic. In fact, all three networks display the same set of phylogenetic $X$-trees, where $X$ is the leaf set of each of the three tree-child networks.

\begin{figure}
\center
\scalebox{1.2}{\input{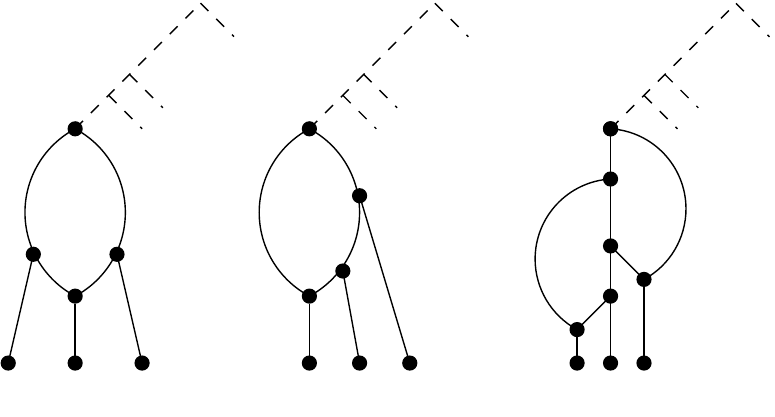_t}}
\caption{Three tree-child networks on $\{a, b, c\}$ (solid edges) that display the same set of triples. By adding additional leaves, say $x_1, x_2, \ldots, x_n$, as indicated by the dashed edges, the three networks can be extended to tree-child networks of arbitrary size that display the same set of triples and phylogenetic trees on $\{a, b, c\}\cup \{x_1, x_2, \ldots, x_n\}$.}
\label{fig:tree-child}
\end{figure}

The paper is organised as follows. The next section contains some preliminaries that are used throughout the paper. The proof of Theorem~\ref{main} relies on being able to recognise so-called cherries and reticulated cherries, certain structures involving two leaves, using only triples and quads. This recognition is established in Section~\ref{key}. The proof of Theorem~\ref{main} as well as the associated algorithm for reconstructing a normal network from its triples and quads are given in Section~\ref{proof}. In the last section, we consider the class of temporal normal networks, and highlight how the approach taken in Theorem~\ref{main} can be simplified for reconstructing such networks from their sets of triples and quads.

Lastly, normal networks are a rich class of phylogenetic networks. Thus, given the negative results mentioned earlier in the introduction, it is a little surprising that they are determined by their sets of triples and quads as Theorem~\ref{main} establishes. Nevertheless, knowing that they are provides impetus for developing a supertree-type method for constructing normal networks. To this end, an intermediate step is to develop an algorithm for deciding if, given a set of triples and quads on overlapping leaf sets, there is a normal network that displays each caterpillar in the given set.

\section{Preliminaries}\label{sec:prelim}

In this section, we state some further notation and terminology used in the paper. We begin by noting that an immediate consequence of the definition is that a phylogenetic network $\cN$ is tree-child if and only if, for every vertex $u$ of $\cN$, there is a directed path from $u$ to a leaf, $\ell$ say, in which each vertex, except $\ell$ and possibly $u$, is a tree vertex. In particular, every vertex in a normal network has this property. We refer to such a path as a {\em tree path (for $u$)}. Note that if $u$ is a leaf, then the path consisting of just $u$ is a tree path for $u$.

\subsection{Embeddings} Let $\cN$ be a phylogenetic network on $X$, and let $\cT$ be a phylogenetic $X'$-tree, where $X'\subseteq X$. An equivalent and convenient way to view the notion of display is as follows. The {\em root extension} of $\cT$ is obtained by adjoining a new vertex, $u$ say, to the root of $\cT$ via a new arc directed away from $u$. It is easily seen that $\cN$ displays $\cT$ if and only if a subdivision, $\cS$ say, of either $\cT$ or the root extension of $\cT$ can be obtained from $\cN$ by deleting arcs and non-root vertices, in which case, the roots of $\cS$ and $\cN$ coincide. This equivalence is freely used throughout the paper. We refer to $\cS$ as an {\em embedding} of $\cT$ in $\cN$ and, for convenience, sometimes view $\cS$ as the arc set of $\cS$.  

Let $\cS$ be an embedding of a phylogenetic tree $\cT$ in $\cN$, and let $(u, v)$ be an arc in $\cN$. If $(u, v)$ is an arc in $\cS$, then $\cS$ {\em uses} $(u, v)$; otherwise, it {\em avoids} $(u, v)$. Analogous terminology holds for the vertices in $\cN$.

\subsection{Cherries and reticulated cherries} For each leaf $x$ of a phylogenetic network, we denote the parent of $x$ by $p_x$. 
Now let $a$, $b$, and $c$ be distinct leaves of a phylogenetic network $\cN$. If $p_a=p_b$, then $\{a, b\}$ is a {\em cherry} of $\cN$. Furthermore, if $p_b$ is a reticulation and $(p_a, p_b)$ is an arc, then $\{a, b\}$ is a {\em reticulated cherry} of $\cN$ in which $b$ is the {\em reticulation leaf}. Moreover, in this case, we denote the parent of $p_b$ that is not $p_a$ by $g_b$. As an example, in Fig.~\ref{fig:triples}, $\{b, c\}$ is a reticulated cherry in which $b$ is the reticulation leaf in $\cN$. However, in the same figure, $\{b, c\}$ is not a reticulated cherry in $\cN'$.

The next lemma is well known for tree-child networks (for example, see~\cite{bor16}). The restriction to normal  networks is immediate. We will use it freely throughout the paper.

\begin{lemma}
Let $\cN$ be a normal network on $X$, where $|X|\ge 2$. Then $\cN$ has either a cherry or a reticulated cherry. 
\label{cherry3}
\end{lemma}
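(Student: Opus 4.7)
The plan is to locate a tree vertex $v$ that is as low as possible, analyze its two children, and conclude directly.

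First, I would say a tree vertex $v$ is \emph{lowest} if no proper descendant of $v$ in $\cN$ is a tree vertex. Such a vertex exists: the root is a tree vertex, and if a tree vertex $v_0$ has a tree-vertex descendant, pick one and iterate; since $\cN$ is a finite DAG, this descent terminates at some tree vertex $v$ with no proper tree-vertex descendants. Note that every vertex reached from $v$ via a directed arc is therefore either a leaf or a reticulation, and the same holds for every proper descendant of $v$ that is not a leaf.

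Next, let $c_1$ and $c_2$ be the two children of $v$. By the tree-child property applied to $v$, at least one of $c_1, c_2$ is a tree vertex or a leaf, but neither is a tree vertex by the choice of $v$, so at least one of them is a leaf. If both are leaves, then $\{c_1, c_2\}$ is a cherry, and we are done. Otherwise, up to relabelling, $c_1 = a$ is a leaf and $c_2 = r$ is a reticulation. Apply the tree-child property to $r$: it has a child $w$ that is a tree vertex or a leaf. Since $w$ is a proper descendant of $v$, it cannot be a tree vertex, so $w$ is a leaf, call it $b$. Then $p_a = v$, $p_b = r$ is a reticulation, and $(p_a, p_b) = (v, r)$ is an arc of $\cN$, so $\{a, b\}$ is a reticulated cherry with reticulation leaf $b$.

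The main content of the argument is the existence of a lowest tree vertex together with the observation that the reticulation $r$ in the second case cannot have a tree-vertex child (which is exactly what forces the grandchild to be a leaf, producing the reticulated cherry). I do not expect genuine obstacles: the argument is a short case analysis, and the normality hypothesis is actually not needed beyond tree-child, which matches the remark in the excerpt that the restriction to normal networks is immediate from the tree-child case.
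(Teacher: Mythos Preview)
Your argument is essentially correct and is the standard one; the paper does not give its own proof but simply cites the result as well known for tree-child networks \cite{bor16}, noting that the restriction to normal networks is immediate. Your proof confirms that remark, since you use only the tree-child property.

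One small terminological slip to fix: in this paper the root has in-degree zero and out-degree two, so by the stated definitions it is \emph{not} a tree vertex (tree vertices have in-degree one). Thus the sentence ``the root is a tree vertex'' is false as written, and in the case $|X|=2$ there are in fact no tree vertices at all. The repair is trivial: take $v$ to be a lowest vertex of out-degree two (equivalently, the root or a tree vertex) rather than a lowest tree vertex. Such a $v$ exists because the root has out-degree two whenever $|X|\ge 2$, and the descent argument through the finite DAG works unchanged. With that adjustment your case analysis on the children of $v$ goes through verbatim.
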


\subsection{Cluster and visibility sets} Let $\cN$ be a phylogenetic network on $X$, and let $u$ be a vertex of $\cN$. The {\em cluster set} of $u$, denoted $C_u$, is the subset of $X$ consisting exactly of each leaf $\ell$ in $X$ for which there is a directed path from $u$ to $\ell$. Furthermore, the {\em visibility set} of $u$, denoted $V_u$, is the subset of $X$ consisting exactly of each leaf $\ell$ in $X$ for which every directed path from the root of $\cN$ to $\ell$ traverses $u$. Observe that $V_u\subseteq C_u$ and that, if there is a tree path from $u$ to a leaf $\ell$, then $\ell\in V_u$. In particular, if $\cN$ is normal, then $V_u$, and thus $C_u$, is non-empty. To illustrate, consider the vertex $u$ in Fig~\ref{fig:triples}(i). The cluster and visibility sets of $u$ are $C_u=\{a, b, c, d\}$ and $V_u=\{a, b, c\}$, respectively. Note that $d\not\in V_u$ as there is a directed path from the root of $\cN$ to $d$ avoiding $u$.

\section{Recognising Cherries}
\label{key}

The key idea in the proof of Theorem~\ref{main} is recognising cherries and reticulated cherries in a normal network $\cN$ using only the triples and quads displayed by $\cN$. In this section, we establish the lemmas for doing this. We begin by recognising cherries.

\begin{lemma}
Let $\cN$ be a normal network on $X$, where $|X|\ge 3$, and let $\cR$ be the set of triples displayed by $\cN$. Let $\{a, b\}\subseteq X$. Then $\{a, b\}$ is a cherry of $\cN$ if and only if $\{a, b\}$ satisfies the following property: if $xy|z\in \cR$ and $\{a, b\}\subseteq \{x, y, z\}$, then $\{a, b\}=\{x, y\}$.
\label{cherry1}
\end{lemma}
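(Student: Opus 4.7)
For the forward direction, I would argue by contradiction. Assume $\{a,b\}$ is a cherry, so $p_a=p_b$, and some triple $xy|z\in\cR$ contains $\{a,b\}$ with $\{a,b\}\neq\{x,y\}$. Up to symmetry this triple has the form $bc|a$ for some $c$. Any embedding of $bc|a$ in $\cN$ features a branching vertex $v$ with internally vertex-disjoint paths to $a$ and to a lower branching vertex $w$, and from $w$ internally vertex-disjoint paths to $b$ and to $c$. Since the only in-arc to $a$ is $(p_a,a)$ and the only in-arc to $b$ is $(p_b,b)$, and $p_a=p_b$, this common vertex must lie on both the $v$-to-$a$ path and the $w$-to-$b$ path, contradicting vertex-disjointness.

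For the backward direction, my plan is to produce a phylogenetic $X$-tree $T$ displayed by $\cN$ in which $\{a,b\}$ is also not a cherry, and then read off the desired triple from $T$. Specifically, if $S_a$ is the sibling subtree of $a$ in $T$, then (since $\{a,b\}$ is not a cherry of $T$) I can pick any leaf $c\in S_a\setminus\{b\}$; then $T$ displays $ac|b$ when $b\notin S_a$ and $bc|a$ when $b\in S_a$, either of which supplies the required element of $\cR$.

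Constructing such a $T$ splits into two cases. If $\{a,b\}$ is a reticulated cherry with $b$ the reticulation leaf, so $p_b$ has parents $p_a$ and $g_b$, I would take $T$ from the embedding that activates $(g_b,p_b)$ rather than $(p_a,p_b)$. Then $p_a$ has out-degree one and is suppressed in the embedding, while tree-childness keeps $g_b$ at out-degree two (via its non-$p_b$ child, a tree vertex or leaf). Thus in $T$ the parent of $b$ is $g_b$, while the parent of $a$ is a strict ancestor of $p_a$ in $\cN$; the no-shortcut property forbids $g_b$ from being such an ancestor, as a directed $g_b$-to-$p_a$ path composed with $p_a\to p_b$ would shortcut $(g_b,p_b)$. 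Hence the two parents differ in $T$. Otherwise, $\{a,b\}$ is neither a cherry nor a reticulated cherry; let $u$ be the child of $p_a$ distinct from $a$. Tree-childness together with the non-reticulated-cherry assumption forces a leaf $c\neq a,b$ into $u$'s cluster: $u$ (or its unique child, if $u$ is a reticulation) admits a tree path to some leaf $c_0\neq a$ by acyclicity, and if $c_0=b$ then the tree path forces $p_b$ to be a tree vertex and we descend to a child of $p_b$ other than $b$ to find such a leaf. Take $T$ to activate $(p_a,u)$ (automatic when $u$ is not a reticulation); then $u$'s subtree in $T$ contains $c$, so $p_a$ is not suppressed and its subtree in $T$ contains $\{a,c\}$ with $c\neq b$, preventing $\{a,b\}$ from being a cherry of $T$.

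The main technical obstacle is the reticulated-cherry case: one must verify that simply replacing $(p_a,p_b)$ by $(g_b,p_b)$ in the embedding genuinely separates the parents of $a$ and $b$ after suppressions. This rests essentially on normality (no shortcuts), and is precisely where the argument breaks down for merely tree-child networks.
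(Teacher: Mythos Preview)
Your forward direction is fine. For the backward direction, your route (build a displayed $X$-tree $T$ in which $\{a,b\}$ is not a cherry, then extract a triple) is different from the paper's: the paper argues directly by assuming the triple property and successively ruling out that $p_a$ is a reticulation, that $p_a$ is the root, that the other child $v$ of $p_a$ has $b\notin C_v$, that $|C_v|>1$, and finally that $\{a,b\}$ is a reticulated cherry --- each exclusion being witnessed by an explicit triple in $\cR$. Your reticulated-cherry case (Case~1) is correct but more elaborate than needed; the paper simply takes a tree-path leaf $\ell'$ for $g_b$ and observes $b\ell'|a\in\cR$.

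There is, however, a genuine gap in your case split. Your Case~2 opens with ``let $u$ be the child of $p_a$ distinct from $a$,'' which silently assumes $p_a$ has out-degree two. If $p_a$ is a reticulation its only child is $a$, and this step is unavailable. Symmetry lets you swap to $p_b$ when $p_b$ is a tree vertex, but that fails exactly when \emph{both} $p_a$ and $p_b$ are reticulations --- a configuration that is perfectly admissible in a normal network and which automatically makes $\{a,b\}$ neither a cherry nor a reticulated cherry, so it lands squarely in your Case~2. The paper disposes of this up front: if $p_a$ is a reticulation with parents $u,u'$ and tree-path leaves $\ell,\ell'$, then either $\ell\neq b$ (yielding $a\ell|b$ or $b\ell|a$ in $\cR$) or $\ell=b$ (yielding $a\ell'|b\in\cR$). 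You need an analogous step before you are entitled to assume $p_a$ is a tree vertex.
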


\begin{proof}
If $\{a, b\}$ is a cherry of $\cN$, then it is easily checked that $\{a, b\}$ satisfies the property in the statement of the lemma. Now suppose that $\{a, b\}$ satisfies this property. First assume $p_a$ is a reticulation. Let $u$ and $u'$ be the two parents of $p_a$. Since $\cN$ is normal, there are two distinct elements, say $\ell$ and $\ell'$, in $X-\{a\}$ such that $\ell$ is at the end of a tree path for $u$ and $\ell'$ is at the end of a tree path for $u'$. If $\ell\neq b$, then either $a\ell|b\in \cR$ or $b\ell|a\in \cR$, a contradiction. So $\ell=b$, but then $a\ell'|b\in\cR$, another contradiction. Thus $p_a$ is a tree vertex or the root of $\cN$. If $p_a$ is the root of $\cN$, then, as $|X|\ge 3$, there exists a triple $b\ell|a\in \cR$, where $\ell\in X-\{a, b\}$, a contradiction. So $p_a$ is a tree vertex.

Let $v$ be the child of $p_a$ that is not $a$. If $b\not\in C_v$, then $a\ell|b\in \cR$, where $\ell\in C_v$, a contradiction. Therefore $b\in C_v$. If $|C_v| > 1$, then there is an element $\ell\in C_v-\{b\}$ such that $b\ell|a\in\cR$. This last contradiction implies that $C_v=\{b\}$, and so $\{a, b\}$ is either a cherry or a reticulated cherry of $\cN$ with reticulation leaf $b$. If the latter, then $p_b=v$ is a reticulation. In this case, let $g_b$ be the parent of $p_b$ that is not $p_a$, and let $\ell'$ be a leaf at the end of a tree path for $g_b$. Since $\cN$ is normal, $\ell'\neq a$, and so $b\ell'|a\in \cR$. This last contradiction implies that $\{a, b\}$ is a cherry, thereby completing the proof of the lemma. \qed
\end{proof}

We next consider the recognition of reticulated cherries. For the purposes of establishing Theorem~\ref{main}, in addition to recognising a reticulated cherry, say $\{a, b\}$ in which $b$ is the reticulation leaf, we also want to determine the visibility set of $g_b$. To this end, we next introduce the notion of a candidate set.

Let $\cN$ be a phylogenetic network on $X$, and let $\cR$ and $\cQ$ be the sets of triples and quads displayed by $\cN$, respectively. Let $\{a, b\}\subseteq X$. A {\em candidate set for $b$} is a non-empty subset, $W_b$ say, of $X-\{a, b\}$ satisfying the following properties:
\begin{enumerate}[{\rm (I)}]
\item For all $c\in W_b$ and all $x\in X-(W_b\cup \{b\})$, the triple $bc|x\in \cR$, but the triple $ac|b\not\in \cR$.

\item For all distinct $c, c'\in W_b$, the triple $bc|c'\not\in \cR$.

\item For all $c\in W_b$, there is no $x\in X-(W_b\cup \{a, b\})$ such that $(x, b, c, a)\in \cQ$.
\end{enumerate}

\begin{lemma}
Let $\cN$ be a normal network on $X$, where $|X|\ge3$, and let $\cR$ and $\cQ$ be the sets of triples and quads displayed by $\cN$, respectively. Let $\{a, b\}\subseteq X$, and let $W_b$ be a candidate set for $b$. Then $\{a, b\}$ is a reticulated cherry of $\cN$ in which $b$ is the reticulation leaf and $W_b$ is the visibility set of $g_b$ if and only if $a$, $b$, and $W_b$ satisfy the following properties:
\begin{enumerate}[{\rm (i)}]
\item For all $x\in X-\{a, b\}$, the triple $ab|x\in \cR$.

\item For all $c\in W_b$, there is no $x\in X-(\{a, b\}\cup W_b)$ such that $(x, b, a, c)$ or $(x, a, b, c)$ is in $\cQ$.

\item If there exists an $x\in X-(\{a, b\}\cup W_b)$ such that $ac|x\in \cR$, where $c\in W_b$, then $(a, b, c, x)$ and $(c, b, a, x)$ are in $\cQ$.
\end{enumerate}
\label{cherry2}
\end{lemma}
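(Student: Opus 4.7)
My plan is to establish the two directions of the biconditional separately. The forward direction is largely a verification by suitable embedding, while the backward direction requires substantial structural analysis and is the main obstacle.

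For the forward direction, assume $\{a,b\}$ is a reticulated cherry with $b$ the reticulation leaf and $W_b = V_{g_b}$. The key mechanism is that every embedding in $\cN$ of a tree containing $b$ routes $b$ through exactly one of the two parents of $p_b$: routing via $p_a$ yields $\{a,b\}$ as a cherry in the embedded tree, while routing via $g_b$ places $b$ below $g_b$ alongside every leaf of $V_{g_b}$. Property (i) follows by routing $b$ via $p_a$ and extending to a full tree embedding using tree paths. Property (ii) follows by case analysis on the route for $b$: neither route can produce the caterpillar orderings $(x,b,a,c)$ or $(x,a,b,c)$ when $c \in V_{g_b}$ and $x \notin V_{g_b} \cup \{a,b\}$, since routing via $p_a$ forces $\{a,b\}$ to be a cherry in the induced four-leaf tree, and routing via $g_b$ forces $b$ to cluster with $c$ before $a$ joins. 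Property (iii) follows by extending an embedding witnessing $ac|x$ in two ways, routing $b$ via $p_a$ to produce the quad $(a,b,c,x)$ and via $g_b$ to produce $(c,b,a,x)$.

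For the backward direction, assume the candidate-set conditions (I)--(III) for $W_b$ together with the lemma's properties (i)--(iii). I first rule out $\{a,b\}$ being an ordinary cherry: for any $c \in W_b$ (non-empty by definition of candidate set), condition (I) with $x = a$ yields $bc|a \in \cR$, which is impossible if $\{a,b\}$ is a cherry since then $ab|c$ is the only displayed triple on $\{a,b,c\}$. Next, using property (i) together with normality of $\cN$ and arguments along the lines of Lemma~\ref{cherry1}, I deduce that $p_a$ is a tree vertex whose non-$a$ child is the reticulation $p_b$, so $\{a,b\}$ is a reticulated cherry. Finally, the asymmetric conditions on $a$ versus $b$ in (I), (ii), and (III) force $b$, rather than $a$, to play the reticulation-leaf role, by ruling out the symmetric structure in which $a$ is the reticulation leaf via construction of a forbidden triple or quad.

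The most delicate remaining step is to verify $W_b = V_{g_b}$. For the inclusion $W_b \subseteq V_{g_b}$: if some $c \in W_b$ admitted a root-to-$c$ path in $\cN$ avoiding $g_b$, I would construct, depending on how this path sits relative to $p_a$ and $g_b$, either an embedded triple $ac|b$ contradicting (I), or a quad $(x,b,a,c) \in \cQ$ for some $x \in X - (\{a,b\} \cup W_b)$ contradicting (ii), or a quad $(x,b,c,a) \in \cQ$ contradicting (III). For the inclusion $V_{g_b} \subseteq W_b$: if some $c' \in V_{g_b} \setminus W_b$ existed, then for any $c \in W_b$ (now known to lie in $V_{g_b}$), condition (I) would require $bc|c' \in \cR$; but both $c$ and $c'$ lie in $V_{g_b}$, so in every embedding they cluster together below $g_b$, yielding $cc'|b$ rather than $bc|c'$, a contradiction. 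The main technical challenge throughout this step is the careful case analysis exploiting the interplay between (I)--(III) and (i)--(iii): the quad conditions together with the triple conditions pin down $W_b$ as exactly $V_{g_b}$, rather than merely a subset or superset, and this is where the quads are essential beyond what triples alone could achieve.
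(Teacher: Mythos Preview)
Your forward direction is fine and matches the paper's argument. The backward direction, however, has a genuine gap at the crucial structural step.

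You write that ``using property (i) together with normality of $\cN$ and arguments along the lines of Lemma~\ref{cherry1}'' you deduce that $p_a$ is a tree vertex whose non-$a$ child is the reticulation $p_b$. But the hypothesis of Lemma~\ref{cherry1} is strictly stronger than~(i): Lemma~\ref{cherry1} assumes that \emph{every} triple on $\{a,b,x\}$ in $\cR$ has $\{a,b\}$ as its cherry, whereas here condition~(I) guarantees $bc|a\in\cR$ for every $c\in W_b$, so the Lemma~\ref{cherry1} argument does not transfer. You give no indication of how to adapt it, and the adaptation is not routine. Moreover, your phrasing already fixes $b$ as the reticulation leaf at this step, which makes your subsequent ``asymmetry'' step~3 redundant; this suggests the logical structure of your plan is not quite straight.

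The paper takes a different route: it works from $p_b$ rather than $p_a$, and the argument that $p_b$ is a reticulation is the longest part of the proof. It assumes $p_b$ is a tree vertex (or root) with second child $v$, and then carries out a case analysis on whether $v$ is a leaf, a tree vertex, or a reticulation, and within the reticulation case on whether $C_v=\{a\}$ or $V_v=W_b$. Each case is eliminated by constructing an explicit embedding contradicting one of (I), (II), (III), (i), (ii), or~(iii); in particular, condition~(iii) is indispensable here (it is what rules out the two reticulation subcases), and you do not invoke~(iii) at all in your structural step. Only after $p_b$ is known to be a reticulation does the paper identify which parent is $p_a$ and which is $g_b$, again using~(ii) and~(III), and establish $W_b=V_{g_b}$ via~(I) and~(II). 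Your plan for the final $W_b=V_{g_b}$ step is plausible in outline, but it depends on the structural step you have not actually established.
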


\begin{proof}
Suppose that $\{a, b\}$ is a reticulated cherry of $\cN$ in which $b$ is the reticulation leaf and $W_b$ is the visibility set of $g_b$. It is easily seen that $a$, $b$, and $W_b=V_{g_b}$ satisfy (i) and (ii). To see that $a$, $b$, and $V_{g_b}$ satisfy (iii), assume that there is an $x\in X-(\{a, b\}\cup V_{g_b})$ such that $ac|x\in \cR$. Let $\cS$ be an embedding of $ac|x$ in $\cN$. Then $\cS$ uses $(p_a, a)$ as well as the arc directed into $g_b$ and the arc directed out of $g_b$ that is not $(g_b, p_b)$. Hence, by adjoining the arcs $(p_a, p_b)$ and $(p_b, b)$ to $\cS$, we construct an embedding of $(a, b, c, x)$ in $\cN$ and, by adjoining the arcs $(g_b, p_b)$ and $(p_b, b)$ to $\cS$, we construct an embedding of $(c, b, a, x)$ in $\cN$. Thus $(a, b, c, x), (c, b, a, x)\in \cQ$.

For the converse, suppose that $a$, $b$, and $W_b$ satisfy (i), (ii), and (iii). We first show that $p_b$ is a reticulation. Assume that $p_b$ is either a tree vertex or the root of $\cN$. Let $v$ denote the child of $p_b$ that is not $b$, and let $\ell$ be the leaf at the end of a tree path for $v$. If $v=\ell$, then, as $|X|\ge 3$, it follows by (i) that $\ell=a$; otherwise, $ab|\ell\not\in \cR$. But then $bc|a\not\in \cR$ for any $c\in W_b$, contradicting (I). Moreover, if $v$ is a tree vertex, then, as $ac|b\not\in \cR$ for each $c\in W_b$, at most one of $a$ and $c$ is an element of $C_v$. If $a\not\in C_v$, then, it is easily checked that $ab|\ell\not\in \cR$, contradicting (i). Thus $a\in C_v$ and so $c\not\in C_v$ for each $c\in W_b$. But then $bc|\ell\not\in \cR$ for any $c$, contradicting (I) in the choice of $W_b$. Therefore $v$ is a reticulation. Consider the cluster set $C_v$ of $v$. If $a\in C_v$ and $C_v\cap W_b\neq \emptyset$, then $ac|b\in \cR$, where $c\in C_v\cap W_b$, contradicting (I) in the choice of $W_b$. Furthermore, if $a\not\in C_v$ and $C_v\cap W_b=\emptyset$, then we can extend an embedding of the triple $ab|c$ in $\cN$, where $c\in W_b$, to an embedding of the caterpillar $(\ell, b, a, c)$, and so $(\ell, b, a, c)\in \cQ$, contradicting (ii). Thus exactly one of $a\in C_v$ and $C_v\cap W_b\neq \emptyset$ holds.

If $a\in C_v$ and $|C_v|\ge 2$, then, as $C_v\cap W_b$ is empty, $\cQ$ contains a caterpillar of the form $(x, a, b, c)$, where $x\in C_v-\{a\}$ and $c\in W_b$. This contradiction to (ii) implies that $|C_v|=1$, and so $C_v=\{a\}$. On the other hand, if $C_v\cap W_b\neq \emptyset$, then, as $W_b$ satisfies (II), it follows that $W_b$ is a subset of the visibility set $V_v$ of $v$. If $V_v-W_b$ is non-empty, then $bc|x\not\in \cR$, where $c\in W_b$ and $x\in V_v-W_b$, contradicting (I) in the choice of $W_b$. Thus, if $C_v\cap W_b\neq \emptyset$, then $V_v=W_b$.

Let $u$ denote the parent of $v$ that is not $p_b$. Since $\cN$ is normal, $(u, v)$ is not a shortcut and there is a tree path from $u$ to a leaf, $\ell'$ say. Let $P$ denote the arc set of this tree path. First assume that $C_v=\{a\}$. If $W_b\cap C_u\neq \emptyset$, then there is a triple $ac|b\in \cR$, where $c\in W_b$, contradicting (I) in the choice of $W_b$. Therefore $W_b\cap C_u$ is empty. Let $c\in W_b$. Since $W_b$ satisfies (I), $bc|a\in \cR$. If $\cS$ is an embedding of $bc|a$ in $\cN$, then $\cS$ uses $(u, v)$. It is now easily checked that the set of arcs
$$(\cS-\{(p_b, b), (u, v)\})\cup (P\cup \{(p_b, v)\})$$
are the arcs of an embedding of $ac|\ell'$ in $\cN$. Since $\ell'\in X-(\{a, b\}\cup W_b)$, it follows from (iii) that $(c, b, a, \ell')\in \cQ$. This is not possible as any phylogenetic tree displayed by $\cN$ with leaf set $\{a, b, c, \ell'\}$ in which $\{b, c\}$ is a cherry, also has $\{a, \ell'\}$ as a cherry. This contradiction implies that if $C_v=\{a\}$, then $p_b$ is a reticulation.

Second assume that $V_v=W_b$. If $a\in C_u$, then there is a triple $ac|b\in \cR$, where $c\in W_b$, contradicting (I) in the choice of $W_b$. Thus $a\not\in C_u$. Let $c\in W_b$. By (i), we have $ab|c\in \cR$. If $\cS$ is an embedding of $ab|c$ in $\cN$, then $\cS$ uses $(u, v)$, and so
$$(\cS-\{(p_b, b), (u, v)\})\cup (P\cup \{(p_b, v)\})$$
are the arcs of an embedding of $ac|\ell'$ in $\cN$. By (iii), $(a, b, c, \ell')\in \cQ$. But again this is not possible as any phylogenetic tree in $\cQ$ with leaf set $\{a, b, c, \ell'\}$ in which $\{a, b\}$ is a cherry, also has $\{c, \ell'\}$ as a cherry. Hence we have now established that $p_b$ is a reticulation.

Let $u_1$ and $u_2$ denote the parents of $p_b$, and let $\ell_1$ and $\ell_2$ denote the leaves at the end of tree paths for $u_1$ and $u_2$, respectively. Note that, since $W_b$ satisfies (II), if there exists an element $c$ in $W_b$ such that $c\in C_{u_i}$ for some $i\in \{1, 2\}$, then $W_b\subseteq V_{u_i}$. In turn, this implies that $W_b=V_{u_i}$; otherwise, $bc|x\not\in \cR$, where $c\in W_b$ and $x\in V_{u_i}-W_b$, contradicting (I). If $W_b\cap (C_{u_1}\cup C_{u_2})$ is empty, then, by considering an embedding of $bc|a$ in $\cN$, it is easily seen that either $(\ell_1, b, c, a)\in \cQ$ or $(\ell_2, b, c, a)\in \cQ$, contradicting (III) in the choice of $W_b$. Without loss of generality we may therefore assume that the visibility set of $u_2$ is $W_b$. If $a\not\in C_{u_1}$, then, by considering an embedding of $ab|c$ in $\cN$ for some $c\in W_b$, we deduce that $(\ell_1, b, a, c)\in \cQ$, contradicting (ii). Thus $a\in C_{u_1}$. If $|C_{u_1}|\ge 2$, then $(x, a, b, c)\in \cQ$, where $x\in C_{u_1}-\{a\}$, contradicting (ii). Hence $C_{u_1}=\{a\}$. We conclude that $\{a, b\}$ is a reticulated cherry of $\cN$ in which $b$ is the reticulation leaf and $W_b$ is the visibility set of $g_b$. \qed
\end{proof}

\subsection{Finding a candidate set} The algorithm associated with Theorem~\ref{main} involves finding the candidate sets for one of two leaves of a potential reticulated cherry. In this subsection, we consider how this can be done in polynomial time.

Let $\cN$ be a phylogenetic network on $X$, where $|X|\ge 3$, and let $\cR$ and $\cQ$ be the sets of triples and quads displayed by $\cN$, respectively. Let $\{a, b\}\subseteq X$, and suppose we want to find all candidate sets for $b$ if such a set exists or determine that there are no such sets. Potentially, we may have to consider all subsets of $X-\{a, b\}$. However, if $c\in X-\{a, b\}$, the next lemma shows that a candidate set for $b$ containing $c$, if it exists, is unique. Thus the task reduces to finding, for each $c\in X-\{a, b\}$, the candidate set for $b$ containing $c$ or determining that no such set exists.

\begin{lemma}
Let $\cR$ and $\cQ$ be the sets of triples and quads, respectively, displayed by a phylogenetic network $\cN$ on $X$, where $|X|\ge 3$. Let $\{a, b\}\subseteq X$ and let $c\in X-\{a, b\}$. If $W_b$ is a candidate set for $b$ containing $c$, then it is the unique candidate set for $b$ containing $c$.
\label{candidate1}
\end{lemma}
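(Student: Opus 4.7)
The plan is to prove uniqueness by contradiction, exploiting the tension between properties (I) and (II) of the candidate set definition. Suppose $W_b$ and $W_b'$ are two candidate sets for $b$ that both contain $c$, and suppose for contradiction that $W_b \neq W_b'$. Without loss of generality, pick $c' \in W_b \setminus W_b'$. Note that $c' \neq a$ and $c' \neq b$ because $c' \in W_b \subseteq X - \{a,b\}$, and $c' \notin W_b'$ by choice, so $c' \in X - (W_b' \cup \{b\})$.

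Now apply property (II) to the set $W_b$: since both $c$ and $c'$ lie in $W_b$ and are distinct, we get $bc|c' \notin \cR$. On the other hand, apply property (I) to the set $W_b'$ with the element $c \in W_b'$ and the external element $x = c' \in X - (W_b' \cup \{b\})$: we obtain $bc|c' \in \cR$. These two conclusions contradict each other, so no such $c'$ exists and $W_b \subseteq W_b'$. The symmetric argument swapping the roles of $W_b$ and $W_b'$ yields $W_b' \subseteq W_b$, so $W_b = W_b'$.

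There is really no substantive obstacle here — the lemma is designed so that properties (I) and (II) precisely pin down membership in a candidate set via triples of the form $bc|\cdot$. The only thing one has to be careful about is verifying that the element $c'$ chosen in the symmetric difference genuinely satisfies $c' \in X - (W_b' \cup \{b\})$ so that property (I) of $W_b'$ is applicable; this is immediate from $c' \in W_b$ and the disjointness of $W_b$ from $\{a,b\}$. Note that properties (III) and quads play no role whatsoever in uniqueness — (I) and (II) already suffice, which is conceptually reassuring since (I) and (II) are exactly the conditions that encode the ``visibility behaviour'' of $b$ relative to its potential grandparent's cluster.
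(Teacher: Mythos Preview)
Your proof is correct and takes essentially the same approach as the paper's. The paper phrases it slightly more directly---observing that for any candidate set $W'_b$ containing $c$ and any $x\in X-\{a,b,c\}$, properties (I) and (II) together give $x\in W'_b$ if and only if $bc|x\notin\cR$, so membership is determined by $\cR$ alone---but your contradiction via the symmetric difference unpacks exactly the same implication.
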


\begin{proof}
Let $W'_b$ be a candidate set for $b$ containing $c$, and let $x\in X-\{a, b, c\}$. Then, as $W'_b$ satisfies (I) and (II) in the definition of a candidate set, $x\in W'_b$ if and only if $bc|x\not\in \cR$. It follows that $W_b=W'_b$, that is, $W_b$ is the unique candidate set for $b$ containing $c$. \qed
\end{proof}

Called {\sc Candidate Set}, the following algorithm takes as its input $X$, $\cR$, $\cQ$, $\{a, b\}$, and $c$ and either finds a candidate set for $b$ containing $c$ or determines that there is no such set.
\begin{enumerate}[1.]
\item Set $U=\{x\in X-\{b, c\}: bc|x\in \cR\}$.

\item Set $W_b=(X-(U\cup \{b\}))\cup \{c\}$.

\item If $a$, $b$, and $W_b$ satisfy their namesakes in (I), (II), and (III), then return $W_b$.

\item Else, return {\em no candidate set for $b$ containing $c$}.
\end{enumerate}

The next lemma establishes the correctness and running time of {\sc Candidate Set}.

\begin{lemma}
Let $\cN$ be a phylogenetic network on $X$, where $|X|\ge 3$, and let $\cR$ and $\cQ$ be the sets of triples and quads displayed by $\cN$, respectively. Let $\{a, b\}\subseteq X$ and $c\in X-\{a, b\}$. Then {\sc Candidate Set} applied to $X$, $\cR$, $\cQ$, $\{a, b\}$, and $c$ correctly returns a candidate set for $b$ containing $c$ if it exists or the statement {\em no candidate set for $b$ containing $c$} if none exists. Furthermore, this application runs in time $O(|X|^2|\cR|+|X|^2|\cQ|)$, that is, $O(|X|^6)$.
\label{candidate2}
\end{lemma}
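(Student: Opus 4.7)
The plan is to establish correctness by identifying, via Lemma~\ref{candidate1}, the exact set that any candidate for $b$ containing $c$ must be, and then to bound the running time step by step.

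The central correctness move will be to show that if a candidate set $W'_b$ for $b$ containing $c$ exists, then the set $W_b$ constructed in step~2 equals $W'_b$. For this I would apply property~(I) with $c'=c$ in two ways. First, taking $x=a$, which lies in $X-(W'_b\cup\{b\})$ since $W'_b\subseteq X-\{a,b\}$, forces $bc|a\in\cR$, so $a\in U$ and hence $a\notin W_b$. Second, letting $x$ range over $X-\{a,b,c\}$, property~(I) with $c'=c$ gives the equivalence $x\in W'_b$ iff $bc|x\notin\cR$ iff $x\notin U$. Together with $c\in W'_b$, $c\notin U$, and $b\notin W'_b$, this yields $W'_b=(X-(U\cup\{b\}))\cup\{c\}=W_b$.

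The two correctness directions then fall out. If a candidate set containing $c$ exists, step~2 produces it exactly, so step~3 succeeds and returns it. Conversely, if step~3 succeeds, then by the checks (I), (II), (III) the returned $W_b$ is, by definition, a candidate set containing $c$. And if step~4 is reached, then by uniqueness (Lemma~\ref{candidate1}) any candidate set containing $c$ would have had to equal the rejected $W_b$, so no such set exists.

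For the running time I would tally step by step: step~1 performs $O(|X|)$ membership tests in $\cR$, costing $O(|X||\cR|)$; step~2 is $O(|X|)$; step~3 verifies~(I) and~(II) with one triple lookup per pair in $X\times X$, costing $O(|X|^2|\cR|)$, and verifies~(III) with one quad lookup per pair, costing $O(|X|^2|\cQ|)$. Summing gives $O(|X|^2|\cR|+|X|^2|\cQ|)$, and since $|\cR|=O(|X|^3)$ and $|\cQ|=O(|X|^4)$ this reduces to $O(|X|^6)$. The main subtlety I anticipate is the first bookkeeping step: noticing that property~(I) itself silently forces $bc|a\in\cR$ and hence excludes $a$ from $W_b$, even though step~2 of the algorithm does not mention $a$ explicitly.
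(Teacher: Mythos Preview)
Your proof is correct and follows essentially the same approach as the paper's: Step~3 guarantees that anything returned is a genuine candidate set, and the uniqueness from Lemma~\ref{candidate1} (whose argument you essentially re-derive inline) ensures that if a candidate set containing $c$ exists, Step~2 must have constructed it. One small slip: the equivalence $x\in W'_b$ iff $bc|x\notin\cR$ requires property~(II) for the forward direction (if $x\in W'_b$ then $bc|x\notin\cR$), not just property~(I); this is exactly how the paper's proof of Lemma~\ref{candidate1} argues it.
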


\begin{proof}
If {\sc Candidate Set} returns a set $W_b$, then, by Step~3, it is a candidate set for $b$ containing $c$. Conversely, if $W'_b$ is a candidate set for $b$ containing $c$. Then, by Lemma~\ref{candidate1}, $W'_b$ is the unique such set and so, by construction, at the end of Step~2, {\sc Candidate Set} constructs $W'_b$. It follows that {\sc Candidate Set} correctly returns $W'_b$.

For the running time, Steps~1 and~2 take $O(|X||\cR|)$ and $O(1)$ time, respectively, while Step~3 takes $O(|X|^2|\cR|+|X|^2|\cR|+|X|^2|\cQ|)$ time. Thus {\sc Candidate Set} completes in $O(|X|^2|\cR|+|X|^2|\cQ|)$ time, that is, in $O(|X|^6)$ time as $|\cR|\le |X|^3$ and $|\cQ|\le |X|^4$. \qed
\end{proof}

\section{Proof of Theorem~\ref{main}}
\label{proof}

In this section, we establish Theorem~\ref{main}. We start with two lemmas and the description of an operation on networks that underlies the induction in the proof of this theorem. Let $\cN$ be a phylogenetic network on $X$, and let $\{a, b\}$ be a subset of $X$. Suppose that $\{a, b\}$ is either a cherry or a reticulated cherry in which $b$ is the reticulation leaf. If $\{a, b\}$ is a cherry, then {\em deleting b} is the operation of deleting $b$ and its incident arc, and suppressing $p_a$ while, if $\{a, b\}$ is a reticulated cherry, then {\em deleting $b$} is the operation of deleting $b$, $p_b$, and their incident arcs, and suppressing $p_a$ and $g_b$. Note that the latter operation of deleting $b$ can be viewed as deleting $(g_b, p_b)$ and suppressing the resulting two degree-two vertices followed by the deletion of $b$ in the resulting network that has cherry $\{a, b\}$. The next lemma, which we will freely use throughout the rest of the paper, is now an immediate consequence of~\cite[Lemma 3.2]{bordewich18}.

\begin{lemma}
Let $\cN$ be a normal network on $X$, and let $\{a, b\}\subseteq X$, where $\{a, b\}$ is either a cherry or a reticulated cherry in which $b$ is the reticulation leaf. If $\cN'$ is obtained from $\cN$ by deleting $b$, then $\cN'$ is a normal network on $X-\{b\}$.
\label{deleting}
\end{lemma}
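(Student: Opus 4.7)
The plan is to verify, in both cases (cherry and reticulated cherry), that $\cN'$ satisfies the three defining requirements of a normal network on $X-\{b\}$: (i) $\cN'$ is a phylogenetic network, (ii) $\cN'$ is tree-child, and (iii) $\cN'$ contains no shortcuts. Preservation of the tree-child property (ii) is exactly the content of Lemma~3.2 of~\cite{bordewich18} cited in the statement, so the work reduces to confirming (i) and (iii).

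For the cherry case, deleting $b$ together with its incident arc and then suppressing $p_a$ plainly yields an acyclic digraph on leaf set $X-\{b\}$ satisfying the required in- and out-degree constraints, so (i) holds. The only new arc produced by the suppression runs from the parent of $p_a$ (when it exists) directly to $a$. This arc cannot be a shortcut, because in $\cN$ the unique parent of $a$ was $p_a$, so no alternative directed path to $a$ survives in $\cN'$.

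For the reticulated cherry case, I would factor the deletion as in the remark preceding the lemma: first delete $(g_b, p_b)$ and suppress the resulting two degree-two vertices $p_b$ and $g_b$, obtaining an intermediate network $\cN^*$ in which $\{a, b\}$ is now a cherry; then apply the cherry case above to $\cN^*$. The intermediate network $\cN^*$ is easily checked to satisfy the degree and acyclicity conditions of a phylogenetic network. The only new arc in $\cN^*$ runs from the parent $q$ of $g_b$ (when it exists) to the child $v$ of $g_b$ other than $p_b$, and after the subsequent cherry deletion the argument of the previous paragraph takes care of the remaining new arc.

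The main (and essentially only) obstacle is showing that the arc $(q, v)$ is not a shortcut in $\cN^*$. Suppose for contradiction that $\cN^*$ contains another directed path $P$ from $q$ to $v$. Since $P$ avoids the deleted vertices $p_b$ and $g_b$ by construction, $P$ lifts to a directed path from $q$ to $v$ in $\cN$ that also avoids $g_b$. Combined with the arc $(g_b, v)$, this gives a directed path from $q$ to $v$ in $\cN$ not using the arc $(q, g_b)$, so $(q, g_b)$ is a shortcut in $\cN$, contradicting normality of $\cN$. With this verification in place, the lemma follows directly from~\cite[Lemma 3.2]{bordewich18}.
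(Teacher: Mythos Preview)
The paper does not actually prove this lemma: it simply records that the statement is ``an immediate consequence of~\cite[Lemma~3.2]{bordewich18}'' and moves on. So your write-up is doing strictly more than the paper does, and the overall plan (check (i)--(iii), factoring the reticulated-cherry deletion through the intermediate network $\cN^*$) is sound.

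However, the final paragraph contains a genuine error. You suppose there is a directed path $P$ from $q$ to $v$ in $\cN^*$ avoiding $(q,v)$, lift it to $\cN$, and then claim that ``combined with the arc $(g_b,v)$'' this shows $(q,g_b)$ is a shortcut in $\cN$. That does not follow: the path you have built ends at $v$, not at $g_b$, so you have no alternative $q$--$g_b$ path; and even if you did, $(q,g_b)$ is a \emph{tree} arc (we already know $g_b$ is a tree vertex, since otherwise its unique child $p_b$ being a reticulation would violate tree-child), so by the paper's definition it cannot be a shortcut at all.

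The fix is shorter than what you wrote. Since $\cN$ is tree-child and the child $p_b$ of $g_b$ is a reticulation, the other child $v$ of $g_b$ must be a tree vertex or a leaf; in particular $v$ has in-degree one in $\cN$, with $g_b$ its sole parent. After deleting $(g_b,p_b)$ and suppressing, $v$ still has in-degree one in $\cN^*$ (its sole parent is now $q$), so $(q,v)$ is not a reticulation arc and hence is not a shortcut. This also shows no parallel edge is created. (Incidentally, $\cN^*$ has a second new arc, $(p_a,b)$, from the suppression of $p_b$; it is harmless for the same reason, since $b$ is a leaf.) With this correction your argument goes through.
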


Let $\cR$ and $\cQ$ be the sets of triples and quads displayed by a phylogenetic network $\cN$ on $X$, respectively. Let $b\in X$, and let
$$\cR'=\{xy|z\in \cR: b\not\in \{x, y, z\}\}$$
and
$$\cQ'=\{(w, x, y, z)\in \cQ: b\not\in \{w, x, y, z\}\}.$$
We say that $\cR'$ and $\cQ'$ have been obtained from $\cR$ and $\cQ$, respectively, by {\em deleting $b$}. The proof of the next lemma is elementary and omitted.

\begin{lemma}
Let $\cN$ be a normal network on $X$, and let $\{a, b\}\subseteq X$, where $\{a, b\}$ is either a cherry or a reticulated cherry in which $b$ is the reticulation leaf. Furthermore, let $\cN'$ be the normal network obtained from $\cN$ by deleting $b$. If $\cR$ and $\cQ$ are the sets of triples and quads displayed by $\cN$, respectively, then the sets of triples and quads displayed by $\cN'$ are obtained from $\cR$ and $\cQ$ by deleting $b$.
\label{triples}
\end{lemma}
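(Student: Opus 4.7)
The plan is to show that a triple or quad whose leaf set avoids $b$ is displayed by $\cN$ if and only if it is displayed by $\cN'$. Since $\cR'$ and $\cQ'$ are defined as precisely those elements of $\cR$ and $\cQ$ whose leaf sets avoid $b$, this equivalence delivers the lemma. I would handle the triple and quad cases together, since the argument only uses that the leaf set $X'$ of the displayed tree $\cT'$ does not contain $b$.

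The heart of the proof is a correspondence between embeddings $\cS$ of $\cT'$ in $\cN$ and embeddings $\cS'$ of $\cT'$ in $\cN'$. Consider first the cherry case, and write $q$ for the parent of $p_a$ in $\cN$ (if it exists). Since $b\notin X'$, the vertex $b$ cannot appear in $\cS$, and so the arc $(p_a,b)$ is not used. A short case analysis on the in- and out-degrees of $p_a$ in $\cS$ then shows that $\cS$ either avoids $p_a$ altogether or uses exactly the arcs $(q,p_a)$ and $(p_a,a)$ (so that $p_a$ is suppressed in $\cS$). In both cases, the deletion of $b$ identifies $\cS$ with a valid embedding $\cS'$ of $\cT'$ in $\cN'$, and conversely every embedding in $\cN'$ lifts back to $\cN$ by re-subdividing the arc $(q,a)$ at $p_a$ whenever it is used. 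This yields the displayed-triple/quad equality for the cherry case.

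The reticulated-cherry case proceeds in the same spirit. Because $p_b$ has $b$ as its unique child and $b\notin X'$, no embedding of $\cT'$ can use $p_b$, so the arcs $(p_a,p_b)$, $(g_b,p_b)$, and $(p_b,b)$ must also be avoided. The possible local configurations at $p_a$ and at $g_b$ reduce, as before, to either ``not used'' or ``used with in-degree and out-degree both equal to $1$ and therefore suppressed''. Under the deletion operation these configurations correspond bijectively to the arcs of $\cN'$ that replace the suppressed $p_a$ and $g_b$, and the lift from $\cN'$ to $\cN$ works as in the cherry case. Here one should note that normality of $\cN$ forces the other child $v$ of $g_b$ to be distinct from $p_a$ (else the directed path $g_b\to p_a\to p_b$ would make $(g_b,p_b)$ a shortcut), so the two suppressions performed to form $\cN'$ are on disjoint local pieces and do not interfere.

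The main (still routine) obstacle is the enumeration of the local configurations at $p_a$, $p_b$, and $g_b$ in any embedding $\cS$: one must verify that each configuration compatible with being a subdivision either is ``unused'' or is ``used in the unique in-out pattern'' invariant under the deletion operation. Once those configurations are in hand, both directions of the correspondence follow and the equalities $\{\text{triples displayed by }\cN'\}=\cR'$ and $\{\text{quads displayed by }\cN'\}=\cQ'$ are immediate.
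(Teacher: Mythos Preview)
Your proposal is correct; the paper itself omits the proof, calling it ``elementary,'' so there is no approach to compare against. Your embedding-correspondence argument is exactly the routine verification the authors had in mind, and your observation that normality prevents $g_b$ from being adjacent to $p_a$ (so the two suppressions do not interfere) is the only non-automatic point worth recording.
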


We now prove the uniqueness part of Theorem~\ref{main}.

\begin{proof}[Proof of Theorem~\ref{main}(i).]
The proof is by induction on the size of $X$. Since $\cN$ is normal, if $|X|=1$, then $\cN$ consists of an isolated vertex and, if $|X|=2$, then $\cN$ consist of two leaves adjoined to the root. In both cases, the theorem holds. 
Now suppose that $|X|\ge 3$, and that the theorem holds for all normal networks with at most $|X|-1$ leaves. Let $\cR$ and $\cQ$ be the sets of triples and quads, respectively, displayed by $\cN$. Let $\cN_1$ be a normal network on $X$ such that the sets of triples and quads displayed $\cN_1$ are $\cR$ and $\cQ$, respectively. By Lemma~\ref{cherry3}, $\cN$ has either a cherry, $\{a, b\}$ say, or a reticulated cherry, $\{a, b\}$ with reticulation leaf $b$ say.

First suppose that $\{a, b\}$ is a cherry of $\cN$. Then, by Lemma~\ref{cherry1}, $\{a, b\}$ is a cherry of $\cN_1$. Let $\cN'$ and $\cN'_1$ denote the normal networks obtained from $\cN$ and $\cN_1$, respectively, by deleting $b$. By Lemma~\ref{triples}, the sets of triples and quads of $\cN'$ and $\cN'_1$ are the same and so, by the induction assumption, up to isomorphism, $\cN'=\cN'_1$. Since $\{a, b\}$ is a cherry of both $\cN$ and $\cN_1$, it follows that, up to isomorphism, $\cN=\cN_1$. Thus, if $\{a, b\}$ is a cherry of $\cN$, part (i) of the theorem holds.

Now suppose that $\{a, b\}$ is a reticulated cherry of $\cN$ in which $b$ is the reticulation leaf and $V_{g_b}$ is the visibility set of $g_b$. Then, by Lemma~\ref{cherry2}, $\{a, b\}$ is a reticulated cherry of $\cN_1$ in which $b$ is the reticulation leaf. Furthermore, if $p'_b$ denotes the parent of $b$ in $\cN_1$, and $g'_b$ denotes the parent of $p'_b$ that is not the parent of $a$ in $\cN_1$, then, by the same lemma, $V_{g_b}$ is the visibility set of $g'_b$. 
Let $\cN'$ and $\cN'_1$ denote the normal networks obtained from $\cN$ and $\cN_1$, respectively, by deleting $b$. By Lemma~\ref{triples}, the sets of triples and quads of $\cN'$ and $\cN'_1$ coincide. Therefore, by the induction assumption, up to isomorphism, $\cN'=\cN'_1$. We complete the proof by showing that, by subdividing, in $\cN'$ (equivalently, $\cN'_1$) there is exactly one arc to insert $p_a$ to adjoin $(p_a, p_b)$ and exactly one arc to insert $g_b$ to adjoin $(g_b, p_b)$ so that, together with the arc $(p_b, b)$, the resulting network is normal, and displays $\cR$ and $\cQ$. We do this using only $\cR$, $\cQ$, and $V_{g_b}$.

Evidently, there is exactly one arc to adjoin $(p_a, p_b)$, namely the arc incident to $a$. Now consider the placement of $g_b$ in $\cN'$. Let $U$ be the subset of vertices of $\cN'$ consisting of each vertex $u$ having the property that $V_u=V_{g_b}$. Observe that in $\cN$, the child of $g_b$ that is not $p_b$ has this property. If $|U|\geq 2$, let $u$ and $u'$ be distinct vertices in $U$. Assume that there is no directed path from $u$ to $u'$ or from $u'$ to $u$. Then, if $\ell'$ is the leaf at the end of a tree path for $u'$, there is a directed path from the root of $\cN'$ to $\ell'$ avoiding $u$, and so $\ell'\not\in V_u$. But $\ell'\in V_{u'}$, a contradiction. Hence, for every pair of vertices in $U$, there is a directed path in $\cN'$ connecting the two vertices. Furthermore, as $\cN'$ is acyclic, there is a directed path from $u$ to $u'$ if and only if there is no directed path from $u'$ to $u$.

Let $P$ be a directed path from the root of $\cN'$ to a leaf $\ell$ such that no directed path from the root of $\cN$ to a leaf has more vertices in $U$ than $P$. Order the vertices in $U$ on $P$, say $u_1, u_2, \ldots, u_k$, so that it is consistent with $P$. That is, $u_i$ is before $u_j$ on $P$ precisely if $i < j$. If there is a vertex $v$ in $U$ that is not on $P$, then, by the maximality of $P$, for some $i\in \{1, 2, \ldots, k-1\}$, there is a directed path from each of $u_1, u_2, \ldots, u_i$ to $v$, and no such path from $u_{i+1}$ to $v$, but there is a directed path from $v$ to $u_{i+1}$. Since $\cN'$ is acyclic, it follows that we can construct a path in $\cN'$ by taking the subpath of $P$ from the root of $\cN'$ to $u_i$, and then adjoining a directed path from $u_i$ to $v$, a directed path from $v$ to $u_{i+1}$, and the subpath of $P$ from $u_{i+1}$ to $\ell$. This constructed path contradicts the maximality of $P$. Thus we may assume that the vertex set of $P$ contains $U$. Furthermore, as $\cN'$ is normal, we may also assume that the subpath of $P$ from $u_k$ to $\ell$ is a tree path.

Let $P'$ denote the subpath of $P$ from $u_1$ to $u_k$. We next show that every vertex in $P'$ is in $U$ and, except possibly $u_1$, every vertex in $P'$ is a tree vertex or a leaf. First assume that $w\neq u_1$ is a reticulation in $P'$. Without loss of generality, we may choose $w$ to be the reticulation in $P'$ closest to $u_1$. Let $v_1$ and $v_2$ denote the parents of $w$ in $\cN'$, and let $\ell_1$ and $\ell_2$ be leaves at the end of tree paths for $v_1$ and $v_2$, respectively. We may assume $v_1$ is in $P'$. Then $\ell_1\in V_{u_1}$ as either $u_1=v_1$ or there is no reticulation between $u_1$ and $v_1$ in $P'$ except possibly $u_1$. But $\ell_1\not\in V_{u_k}$, a contradiction. Thus every vertex in $P'$, except possibly $u_1$, is a tree vertex or a leaf. Now assume that there is a vertex $w$ on $P'$ that is not in $U$. Then, as the subpath of $P'$ from $w$ to $u_k$ consists of tree vertices, it follows that $V_{u_k}\subseteq V_w$. If $V_{u_k}= V_w$, then $V_w\in U$, a contradiction. Moreover, if $V_w-V_{u_k}$ is non-empty, then, as the subpath of $P'$ between $u_1$ and $w$ consists of tree vertices, $V_{u_k}\subsetneqq V_{u_1}$, again a contradiction. Thus every vertex on $P'$ is in $U$.

Now, let $\cI=\{1, 2, \ldots, k-1\}$ if $u_1$ is a tree vertex, and let $\cI=\{2, 3, \ldots, k-1\}$ otherwise. Furthermore, for all $i\in \cI$, let $v_i$ denote the child of $u_i$ that is not on $P$, and let $m_i$ denote the leaf at the end of a tree path for $v_i$. If, for some $i\in \cI$, the vertex $v_i$ is a tree vertex or a leaf, then it is easily checked that $m_i\in V_{u_i}$, but $m_i\not\in V_{u_k}$. This contradiction implies that $v_i$ is a reticulation for each $i\in\cI$. Clearly, the elements in $\{m_i: i\in\cI\}$ are pairwise distinct. Also, if $u_k\neq \ell$, let $u_k'$ be the child of $u_k$ that is not on $P$. If $u_k'$ is a reticulation, then the visibility set of the child of $u_k$ that is on $P$ is $V_{u_k}$, and therefore an element in $U$; a contradiction. Hence, if $u_k\neq \ell$, then $u_k'$  is a tree vertex or a leaf.

At last, we consider the placement of $g_b$ in $\cN'$. By the construction of $P$, the vertex $g_b$ corresponds to a subdivision of an arc directed into a vertex in $\{u_i: i\in \cI\cup \{k\}\}$. If $k=1$, then $u_1$ is a tree vertex or a leaf, and the unique placement of $g_b$ is a subdivision of the arc directed into $u_1$. If $k=2$ and $u_1$ is a reticulation, then $u_2$ is a tree vertex or a leaf, and the unique placement of $g_b$ is a subdivision of the arc directed into $u_2$. So assume that either $k=2$ and $u_1$ is a tree vertex, or $k\ge 3$. Let
$$\cQ_P=\{(m_i, \ell, b, a)\in \cQ: i\in \cI\}.$$
Furthermore, let $i'$ be the minimum element in $\cI$ for which $(m_{i'}, \ell, b, a)\in \cQ_P$. Then it is easily seen that each of
$$(m_{i'}, \ell, b, a), (m_{i'+1}, \ell, b, a), \ldots, (m_{k-1}, \ell, b, a)$$
is in $\cQ_P$. In particular, the unique placement of $g_b$ is a subdivision of the arc directed into $u_k$ if $\cQ_P$ is empty and $u_{i'}$ otherwise. This completes the proof of part~(i) of the theorem. \qed
\end{proof}

\subsection{The algorithm} Let $\cR$ and $\cQ$ be the sets of triples and quads, respectively, of a normal network $\cN$ on $X$. Called {\sc Construct Normal}, we now present a recursive algorithm whose input is $X$, $\cR$, and $\cQ$ and returns a normal network $\cN_0$ that is isomorphic to $\cN$. The correctness of the algorithm is essentially established in the constructive proof of Theorem~\ref{main}(i), and so it is omitted. The running time of {\sc Construct Normal} is given immediately after its description.

\begin{enumerate}[1.]
\item If $|X|=1$, then return the phylogenetic network consisting of the single vertex in $X$.

\item If $|X|=2$, then return the phylogenetic network consisting of the two leaves in $X$ adjoined to the root.

\item Else, find either $\{a, b\}\subseteq X$, or $\{a, b\}\subseteq X$ and $W_b\subseteq X-\{a, b\}$, where $W_b$ is a candidate set for $b$, satisfying the sufficiency conditions of their namesakes in the statements of Lemmas~\ref{cherry1} or~\ref{cherry2}, respectively.
\item Delete $b$ in $\cR$ and $\cQ$ to give the sets $\cR'$ and $\cQ'$ of triples and quads, respectively, on $X'=X-\{b\}$.
\begin{enumerate}[(a)]
\item If $\{a, b\}\subseteq X$ satisfies the sufficiency condition in Lemma~\ref{cherry1}, then apply {\sc Construct Normal} to input $X'$, $\cR'$, and $\cQ'$, construct $\cN_0$ from the returned normal network $\cN'_0$ on $X'$ by subdividing the arc directed into $a$ with a new vertex $p_a$,  adjoin a new leaf $b$ to $p_a$ via the new arc $(p_a, b)$, and return $\cN_0$.

\item Else, $\{a, b\}\subseteq X$ and $W_b\subseteq X-\{a, b\}$ satisfy the sufficiency conditions of Lemma~\ref{cherry2}. Apply {\sc Construct Normal} to input $X'$, $\cR'$, and $\cQ'$, and construct $\cN_0$ from the returned normal network $\cN'_0$ on $X'$ as follows.
\begin{enumerate}[(i)]
\item Find the vertex $u$ whose visibility set is $W_b$ and whose cluster set is minimal with respect to containing $W_b$.

\item Find the path $u_1, u_2, \ldots, u_k$ of vertices, where $u_k=u$, consisting of precisely those vertices in $\cN'_0$ whose visibility set is $W_b$. Let $\ell$ denote the leaf at the end of a tree path for $u_k$.

\item Let $\cI=\{1, 2, \ldots, k-1\}$ if $u_1$ is a tree vertex; otherwise, let $\cI=\{2, 3, \ldots, k-1\}$. For each $i\in \cI$, let $v_i$ denote the reticulation child of $u_i$, and let $m_i$ denote the leaf at the end of a tree path for $v_i$.

\item If $k=1$, subdivide the arc directed into $u_1$ with a new vertex $g_b$. If $k=2$ and $u_1$ is a reticulation, subdivide the arc directed into $u_2$ with a new vertex $g_b$.

\item Else, $k=2$ and $u_1$ is a tree vertex, or $k\ge 3$. If there is no quad $(m_i, \ell, b, a)$ in $\cQ$, where $i\in \cI$, subdivide the arc directed into $u_k$ with a new vertex $g_b$. Otherwise, subdivide the arc directed into $u_{i'}$, where $i'$ is the smallest $i$ such that $(m_{i'}, \ell, b, a)\in \cQ$, with a new vertex $g_b$.

\item Subdivide the arc directed into $a$ with a new vertex $p_a$, adjoin a new vertex $p_b$ via new arcs $(p_a, p_b)$ and $(g_b, p_b)$, adjoin a new leaf $b$ via a new arc $(p_b, b)$, and set $\cN_0$ to be the resulting network.

\item Return $\cN_0$.
\end{enumerate}
\end{enumerate}
\end{enumerate}

We now consider the running time of {\sc Construct Normal}.

\begin{proof}[Proof of Theorem~\ref{main}(ii).]
The algorithm takes as input a set $X$, and sets $\cR$ and $\cQ$ of triples and quads, respectively, of a normal network $\cN$ on $X$. If $|X|\in \{1, 2\}$, then the algorithm runs in constant time. If $|X|\ge 3$, then the algorithm begins by finding either $\{a, b\}\subseteq X$, or $\{a, b\}\subseteq X$ and $W_b\subseteq X-\{a, b\}$, where $W_b$ is a candidate set for $b$, satisfying the sufficiency conditions of their namesakes in Lemmas~\ref{cherry1} or~\ref{cherry2}, respectively. In the worst possible instance, the longest running part of this process involves finding the latter. The check that $a$, $b$, and $W_b$ satisfy the sufficiency conditions in Lemma~\ref{cherry2} takes $O(|X||\cR|+|X|^2|\cQ|+|X|^2|\cR||\cQ|)$ time. Since there is asymmetry between $a$ and $b$ and, by Lemma~\ref{candidate1}, there are at most $|X|$ candidate sets for $b$, the number of such checks is at most $O(|X|^3)$. Thus the total time to run the checks is
$$O(|X|^3(|X||\cR|+|X|^2|\cR||\cQ|)).$$
But we also need to find, for each leaf $b$, all candidate sets for $b$. By Lemma~\ref{candidate2}, as there are at most $|X|$ candidate sets for $b$, it takes
$$O(|X|(|X|^2|\cR|+|X|^2|\cQ|))$$
time to find all such sets for $b$. Thus the running time to complete Step~3 is $O(|X|^3(|X||\cR|+|X|^2|\cR||\cQ|))$.

We next delete $b$ in $\cR$ and $\cQ$, and this takes at most $O(|\cR|+|\cQ|)$ time. Clearly, Step~4(a) takes less time to complete than Step~4(b), so we may assume that the latter is reached. To determine the cluster set of a vertex $u$ of $\cN'_0$, a single postorder transversal of $\cN'_0$ is sufficient. Furthermore, to determine the visibility set of a vertex $u$ of $\cN'_0$, we delete $u$ and its incident arcs and check, for each leaf $\ell$ in $X'$ whether the resulting rooted acyclic digraph, $D'$ say, has a directed path from its root $\rho'$ to $\ell$. Effectively, we are finding the `cluster set' $C_{\rho'}$ of $\rho'$ in $D'$. The visibility set of $u$ in $\cN'_0$ consists precisely of those leaves in $X'$ not in $C_{\rho'}$. A single postorder transversal of $D'$ is sufficient to determine $C_{\rho'}$. Since $\cN$ has at most $O(|X|)$ vertices and, therefore, at most $O(|X|)$ edges in total~\cite{bic12} (also see~\cite{mcd15}), it takes $O(|X|)$ time to find the visibility set of $u$, and so it takes $O(|X|^2)$ time to complete Steps~4(b)(i) and~4(b)(ii). Once $u_1, u_2, \ldots, u_k$ are determined, finding the leaves $m_2, m_3, \ldots, m_{k-1}$, and possibly $m_1$ if $u_1$ is a tree vertex and $k\ge 2$, takes $O(|X|^2)$ time as $k\le |X|$. If performed, Step~4(b)(iv) takes constant time, while Step~4(b)(v) takes $O(|X||\cQ|)$ time. 
Thus the location of $g_b$, ignoring the running time of Step~3, can be found in time $O(|X|^2+|X||\cQ|)$. Since Step~4(b)(vi) takes constant time, it follows that Step~4 takes $O(|X|^2+|X||\cQ|)$ time to complete. Hence $\cN_0$ can be returned in
$$O(|X|^3(|X||\cR|+|X|^2|\cR||\cQ|))$$
time, and so the total time of each iteration is $O(|X|^3(|X||\cR|+|X|^2|\cR||\cQ|))$.

When recursing, the input to the recursive call is a set $X'$, and sets $\cR'$ and $\cQ'$ of triples and quads of a normal network on $|X|-1$ leaves. Therefore the total number of iterations is $O(|X|)$. Hence {\sc Construct Normal} completes in $O(|X|^4(|X||\cR|+|X|^2|\cR||\cQ|))$time, that is, in $O(|X|^{13})$ time as $|\cR|\le |X|^3$ and $|\cQ|\le |X|^4$. This completes the proof of Theorem~\ref{main}(ii). \qed
\end{proof}

\section{Temporal Normal Networks}

In this section, we consider a certain subclass of normal networks and briefly outline how they can be reconstructed from their sets of displayed triples and quads. Let $\cN$ be a phylogenetic network on $X$, and let $V$ be the vertex set of $\cN$. We say that $\cN$ is {\em temporal} if there exists a map $t: V\rightarrow\mathbb{R}^+$ such that for all $u, v\in V$, we have $t(u)=t(v)$ if $(u, v)$ is a reticulation arc and $t(u) < t(v)$ if $(u, v)$ is a tree arc. Note that the two networks shown in Fig.~\ref{fig:triples} are temporal and, so, temporal normal networks cannot be determined by their set of displayed triples. Biologically, if a phylogenetic network is temporal, then it satisfies two natural timing constraints. Firstly, speciation events occur successively and, secondly, reticulation events occur contemporaneously and so such events are realised by coexisting ancestral species.

Let $\cN$ be a phylogenetic network on $X$, and let $\{a, b, c\}$ be a three-element subset of $X$. If $p_b$ is a reticulation, and both $\{a, b\}$ and $\{b, c\}$ are reticulated cherries, then $\{a, b, c\}$ is referred to as a {\em double-reticulated cherry} of $\cN$ in which $b$ is the {\em reticulation leaf}. Now, for a temporal normal network $\cN$, let $u$ be a tree vertex such that, for all other tree vertices $u'$, we have $t(u)\ge t(u')$, it is straightforward to show that $\cN$ has either a cherry or a double-reticulated cherry (see, for example, \cite{bor16}). For a double-reticulated cherry $\{a, b, c\}$ in $\cN$ in which $b$ is the reticulation leaf, consider the operation of deleting $b$ (as defined for a more general reticulated cherry in Section~\ref{proof}). Recall that this operation corresponds to deleting $b$, $p_b$, and their incident arcs, and suppressing the resulting degree-two vertices. 
Noting that a temporal tree-child network is normal, it  follows from~\cite[Lemma 5.1]{bor16} that the phylogenetic network obtained from $\cN$ by deleting $b$ is temporal and normal. Analogous to Lemma~\ref{cherry2}, the next lemma establishes necessary and sufficient conditions to recognise a double-reticulated cherry in a normal network.


\begin{lemma}
Let $\cN$ be a normal network on $X$, where $|X|\ge 3$, and let $\cR$ and $\cQ$ be the sets of triples and quads displayed by $\cN$, respectively. Let $\{a, b, c\}\subseteq X$. Then $\{a, b, c\}$ is a double-reticulated cherry of $\cN$ in which $b$ is the reticulation leaf if and only if $\{a, b, c\}$ satisfies the following three properties:
\begin{enumerate}[{\rm (i)}]
\item For all $x\in X-\{a, b\}$, the triple $ab|x\in \cR$ and, for all $x\in X-\{b, c\}$, the triple $bc|x\in \cR$, but the triple $ac|b\not\in \cR$.

\item There is no $x\in X-\{a, b, c\}$ such that $(x, b, a, c)$, $(x, b, c, a)$, $(x, a, b, c)$, or $(x, c, b, a)$ is in $\cQ$.

\item If there exists an $x\in X-\{a, b, c\}$ such that $ac|x\in \cR$, then $(a, b, c, x)$ and $(c, b, a, x)$ are in $\cQ$.
\end{enumerate}
\label{double-cherry}
\end{lemma}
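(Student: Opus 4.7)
The plan is to deduce Lemma~\ref{double-cherry} from Lemma~\ref{cherry2} by applying the latter twice, once with the roles of $a$ and $c$ interchanged. The key observation is that a double-reticulated cherry $\{a,b,c\}$ with reticulation leaf $b$ is exactly the situation in which $\{a,b\}$ is a reticulated cherry (with reticulation leaf $b$) whose associated vertex $g_b$ coincides with $p_c$, so that the visibility set of $g_b$ is $\{c\}$; by symmetry, $\{b,c\}$ is then also a reticulated cherry of $\cN$ with reticulation leaf $b$ and the associated visibility set $\{a\}$.

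For the forward direction, I would fix a double-reticulated cherry $\{a,b,c\}$ in $\cN$. Since $p_b$ has both $p_a$ and $p_c$ as parents, every leaf below $p_b$ other than $c$ admits a root-to-leaf path through $p_a$ that avoids $p_c$, and hence $V_{p_c}=\{c\}$. After a routine check that $\{c\}$ is a candidate set for $b$ (with respect to $a$), Lemma~\ref{cherry2} applied with $W_b=\{c\}$ delivers: $ab|x\in\cR$ for all $x\in X-\{a,b\}$; $bc|x\in\cR$ for all $x\in X-\{b,c\}$; $ac|b\notin\cR$; no quad of the form $(x,b,a,c)$, $(x,a,b,c)$, or $(x,b,c,a)$ lies in $\cQ$ for $x\in X-\{a,b,c\}$; and whenever $ac|x\in\cR$ with $x\in X-\{a,b,c\}$, both $(a,b,c,x)$ and $(c,b,a,x)$ lie in $\cQ$. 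Applying Lemma~\ref{cherry2} symmetrically with $W_b'=\{a\}$ additionally forbids quads of the form $(x,c,b,a)$. Combining these yields exactly (i)--(iii).

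For the converse, I would assume (i)--(iii) and verify the hypotheses of Lemma~\ref{cherry2} twice. With $W_b=\{c\}$, property (I) of a candidate set and hypothesis (i) follow from clause (i); property (II) is vacuous since $|W_b|=1$; property (III) and hypothesis (ii) are the appropriate cases of clause (ii); and hypothesis (iii) is clause (iii). Lemma~\ref{cherry2} then gives that $\{a,b\}$ is a reticulated cherry of $\cN$ with reticulation leaf $b$ and $V_{g_b}=\{c\}$. The symmetric application with $W_b'=\{a\}$ gives that $\{b,c\}$ is a reticulated cherry with reticulation leaf $b$. Since the reticulation $p_b$ has exactly two parents and $p_a\ne p_c$ (otherwise the binary tree vertex $p_a=p_c$ would have both $a$ and $c$ as children, leaving no arc to $p_b$), the other parent of $p_b$ produced by the first application must be $p_c$, and so $\{a,b,c\}$ is a double-reticulated cherry with reticulation leaf $b$. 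The only point requiring real care is the bookkeeping: the four forbidden quad shapes in (ii) must be distributed across the two applications, with $(x,b,c,a)$ and $(x,b,a,c)$ supplied by both, $(x,a,b,c)$ arising only in the first, and $(x,c,b,a)$ arising only in the second.
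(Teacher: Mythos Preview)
Your proposal is correct and follows exactly the approach the paper indicates: the paper omits the proof, stating only that it is a consequence of Lemma~\ref{cherry2} via the observation that in a double-reticulated cherry $\{a,b,c\}$ with reticulation leaf $b$ one has $V_{g_b}=\{c\}$ (and symmetrically $\{a\}$ with the roles of $a$ and $c$ interchanged). Your bookkeeping of how conditions (I)--(III) for the candidate sets $\{c\}$ and $\{a\}$ together with conditions (i)--(iii) of Lemma~\ref{cherry2} assemble into (i)--(iii) of Lemma~\ref{double-cherry} is accurate, including the point that the quad shape $(x,c,b,a)$ is only excluded by the second (symmetric) application.
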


We omit the proof as it is a consequence of Lemma~\ref{cherry2}. In particular, in viewing a double-reticulated cherry $\{a, b, c\}$ with reticulation leaf $b$ as a reticulated cherry $\{a, b\}$ with reticulation leaf $b$, observe that the visibility set of $g_b$ is $\{c\}$. Of course, the same observation applies to $\{b, c\}$ but with the roles of $a$ and $c$ interchanged. We now turn back to temporal normal networks since they are phylogenetic networks for which we can repeatedly delete the reticulation leaf of a double-reticulated cherry or a leaf of a cherry until we are left with a single vertex. Hence, with Lemma~\ref{double-cherry} in hand, we can reconstruct a temporal normal network from its sets of displayed triples and quads using an algorithm that is a simplification of {\sc Construct Normal}. Without going into details, Steps~3 and~4(b) of {\sc Construct Normal} can be simplified in the following way, while the other steps remain unchanged. If the input  is a temporal normal network $\cN$ on $X$ as well as its sets of displayed triples and quads, Step~3 finds $\{a, b\}\subseteq X$ or $\{a, b, c\}\subseteq X$ that satisfies the conditions of their namesakes in the statements of Lemmas~\ref{cherry1} or~\ref{double-cherry}. Moreover, Step~4(b) reconstructs a temporal normal network on $X$ from a temporal normal network on $X-\{b\}$ by subdividing the arc directed into $a$ (resp.\ $c$) with a new vertex $p_a$ (resp.\ $p_c$), adjoining a new vertex $p_b$ via new arcs $(p_a, p_b)$ and $(p_c, p_b)$, and adjoining a new leaf $b$ via a new arc $(p_b, b)$.


\begin{thebibliography}{99}
\bibitem{aho81}
Aho AV, Yehoshua S, Szymanski TG, Ullman, JD (1981) Inferring a tree from lowest common ancestors with an application to the optimization of relational expressions. SIAM Journal on Computing 10:405--421

\bibitem{bic12} Bickner DR, (2012) On normal networks. PhD thesis, Iowa State University, Ames, Iowa

\bibitem{bin04} Bininda-Emonds ORP (2004) The evolution of supertrees. Trends in Ecology and Evolution 19:315--322

\bibitem{bordewich18}
Bordewich M, Huber KT, Moulton V, Semple C (2018) Recovering normal networks from shortest inter-taxa distance information. Journal of Mathematical Biology 77:571--594

\bibitem{bor16} 
Bordewich M, Semple C (2016) Determining phylogenetic networks from inter-taxa distances. Journal of Mathematical Biology 73:283--303

\bibitem{cardona09}
Cardona G, Rossell\'o F, Valiente G (2009) Comparison of tree-child phylogenetic networks. IEEE/ACM Transactions on Computational Biology and Bioinformatics 6:552--569

\bibitem{francis18}
Francis A, Moulton V (2018) Identifiability of tree-child phylogenetic networks under a probabilistic recombination-mutation model of evolution. Journal of Theoretical Biology 446:160--167

\bibitem{francis15}
Francis AR, Steel M (2015) Which phylogenetic networks are merely trees with additional arcs? Systematic Biology 64:768--777 

\bibitem{gambette12}
Gambette P, Huber KT On encodings of phylogenetic networks of bounded level. Journal of Mathematical Biology 65:157--180

\bibitem{huber17}
Huber KT, van Iersel L, Moulton V, Scornavacca C, Wu T (2017) Reconstructing phylogenetic level-1 networks from nondense binet and trinet sets. Algorithmica 77:173--200 

\bibitem{huber14}
K.T.\ Huber, L.\ van Iersel, V.\ Moulton, T.\ Wu, How much information is needed to infer reticulate evolutionary histories? Systematic Biology 64 (2015) 102--111.

\bibitem{huber12}
Huber KT, Moulton V (2012) Encoding and constructing 1-nested phylogenetic networks with trinets. Algorithmica 66: 714--738 


\bibitem{vanIersel13}
van Iersel L, Moulton V (2014) Trinets encode tree-child and level-2 phylogenetic networks. Journal of Mathematical Biology 68:1707--1729 

\bibitem{vanIersel17}
van Iersel L, Moulton V, de Swart E, Wu T (2017) Binets: Fundamental building blocks for phylogenetic networks. Bulletin of Mathematical Biology 79:1135--1154

\bibitem{mcd15} McDiarmid C, Semple C, Welsh D (2015) Counting phylogenetic networks. Annals of Combinatorics 19:205--224



\bibitem{pardi15}
Pardi F, Scornavacca C (2015) Reconstructible phylogenetic networks: Do not distinguish the indistinguishable. PLoS Computational Biology 11:e1004135

\bibitem{semple03}
Semple C, Steel M (2003) Phylogenetics, Oxford University Press, New York

\bibitem{willson11}
Willson SJ (2011) Regular networks can be uniquely constructed from their trees. IEEE/ACM Transactions on Computational Biology and Bioinformatics 8:785--796

\bibitem{willson10}
Willson SJ (2010) Properties of normal phylogenetic networks. Bulletin of Mathematical Biology 72:340--358
\end{thebibliography}
\end{document}